\newtheorem{theorem}{Theorem}[section]
\newtheorem{lemma}[theorem]{Lemma}
\newtheorem{proposition}[theorem]{Proposition}
\theoremstyle{definition}
\newtheorem{definition}[theorem]{Definition}
\newtheorem{example}[theorem]{Example}
\newtheorem{assumption}[theorem]{Assumption}
\theoremstyle{remark}
\newtheorem{remark}[theorem]{Remark}
\newcommand{\Q}{\mathbb{Q}}
\newcommand{\R}{\mathbb{R}}
\newcommand{\LL}{\mathbb{L}}
\newcommand{\Proj}{\mathbb{P}}
\newcommand{\WP}{\mathbb{W}\mathbb{P}}
\newcommand{\calO}{\mathcal{O}}
\newcommand{\calE}{\mathcal{E}}
\newcommand{\calF}{\mathcal{F}}
\newcommand{\calM}{\mathcal{M}}
\newcommand{\calN}{\mathcal{N}}
\newcommand{\calA}{\mathcal{A}}
\newcommand{\calX}{\mathcal{X}}
\newcommand{\calY}{\mathcal{Y}}
\DeclareMathOperator{\NS}{NS}
\DeclareMathOperator{\Kum}{Kum}
\DeclareMathOperator{\Pic}{Pic}
\DeclareMathOperator{\rank}{rank}
\numberwithin{table}{section}
\numberwithin{equation}{section}
\begin{document}

\title[Calabi-Yau threefolds fibred by Kummer surfaces]{Calabi-Yau threefolds fibred by Kummer surfaces associated to products of elliptic curves}

\author[C. F. Doran]{Charles F. Doran}
\address{Department of Mathematical and Statistical Sciences, 632 CAB, University of Alberta, Edmonton, AB, T6G 2G1, Canada}
\email{charles.doran@ualberta.ca}
\thanks{C. F. Doran and A. Y. Novoseltsev were supported by the Natural Sciences and Engineering Resource Council of Canada (NSERC), the Pacific Institute for the Mathematical Sciences (PIMS), and the McCalla Professorship at the University of Alberta}

\author[A. Harder]{Andrew Harder}
\address{Department of Mathematical and Statistical Sciences, 632 CAB, University of Alberta, Edmonton, AB, T6G 2G1, Canada}
\email{aharder@ualberta.ca}
\thanks{A. Harder was supported by an NSERC Post-Graduate Scholarship}

\author[A.Y. Novoseltsev]{Andrey Y. Novoseltsev}
\address{Department of Mathematical and Statistical Sciences, 632 CAB, University of Alberta, Edmonton, AB, T6G 2G1, Canada}
\email{novoselt@ualberta.ca}

\author[A. Thompson]{Alan Thompson}
\address{Department of Pure Mathematics, University of Waterloo, 200 University Ave West, Waterloo, ON, N2L 3G1, Canada}
\email{am6thomp@uwaterloo.ca}
\thanks{A. Thompson was supported by a Fields-Ontario-PIMS Postdoctoral Fellowship with funding provided by NSERC, the Ontario Ministry of Training, Colleges and Universities, and an Alberta Advanced Education and Technology Grant}

\subjclass[2010]{Primary 14D06, Secondary 14J28, 14J30, 14J32}

\date{}

\begin{abstract} We study threefolds fibred by Kummer surfaces associated to products of elliptic curves, that arise as resolved quotients of threefolds fibred by certain lattice polarized K3 surfaces under a fibrewise Nikulin involution. We present a general construction for such surfaces, before specializing our results to study Calabi-Yau threefolds arising as resolved quotients of threefolds fibred by mirror quartic K3 surfaces. Finally, we give some geometric properties of the Calabi-Yau threefolds that we have constructed, including expressions for Hodge numbers.
\end{abstract}

\maketitle

\section{Introduction}

Building on earlier work by Shioda, Inose \cite{sk3s}\cite{desk3sni}, Nikulin \cite{isbfa} and Morrison \cite{k3slpn}, Clingher and Doran  \cite{milpk3s}\cite{ngik3s} exhibited a duality between K3 surfaces admitting a lattice polarization by the lattice
\[M := H \oplus E_8 \oplus E_8\]
and Kummer surfaces associated to products of elliptic curves, that closely relates the geometry of the surfaces on each side. This duality is easy to describe: any $M$-polarized K3 surface admits a canonically defined Nikulin involution, the resolved quotient by which is a Kummer surface associated to a product of elliptic curves and, conversely, a Kummer surface associated to a product of elliptic curves also admits a Nikulin involution, the resolved quotient by which is isomorphic to an $M$-polarized K3 surface. Moreover, applying this process twice returns us to the surface we started with.

This duality was exploited in \cite{14thcase}, to obtain certain geometric properties of a Calabi-Yau threefold admitting a fibration by $M$-polarized K3 surfaces. In that case, it was proven that Clingher's and Doran's construction could be performed fibrewise, giving rise to a new Calabi-Yau threefold that was fibred by Kummer surfaces associated to products of elliptic curves. As it turns out, the geometry of the Kummer fibred threefold thus obtained was easier to study, and could be used to derive geometric properties of the original threefold fibred by $M$-polarized K3 surfaces.

The main aim of this paper is to investigate to what extent this construction can be generalized to arbitrary threefolds fibred by $M$-polarized K3 surfaces.

More precisely, suppose that $\calX$ is a threefold fibred by K3 surfaces and that the restriction $\calX_U$ of this fibration to the locus of smooth fibres is an $M$-polarized family of K3 surfaces, in the sense of \cite[Definition 2.1]{flpk3sm}. Then results of \cite{flpk3sm} show that the canonical Nikulin involution on the fibres of $\calX_U$ extends to the entire threefold, so we may quotient and resolve singularities to obtain a threefold fibred by Kummer surfaces $\calY_U$. We may then ask whether $\calY_U$ can be compactified to a threefold $\calY$ and, if so, what properties this new threefold has.

One case that is of particular interest is when the threefold $\calX$ is Calabi-Yau, as occurred in \cite{14thcase}. In this case, one would like to know whether $\calY$ (if it exists) is also Calabi-Yau and, if so, how its properties relate to those of $\calX$.

In the latter part of this paper we address this second question in a special case, where the Calabi-Yau threefolds $\calX$ are very well-understood. Specifically, we consider the setting where $\calX$ is one of the Calabi-Yau threefolds $\calX_g$ fibred by mirror quartic K3 surfaces constructed in \cite{cytfmqk3s}. Note that this is a special case of the construction above as, by definition, a mirror quartic K3 surface is polarized by the lattice $M_2 := H \oplus E_8 \oplus E_8 \oplus \langle -4\rangle$, which clearly contains $M$ as a primitive sublattice. These threefolds $\calX_g$ encompass many well-known examples, including the quintic mirror threefold, and provide a useful illustration of our methods in a concrete setting.

In this special case, we show that we can explicitly construct Kummer surface fibred threefolds $\calY_g$ that are related to the $\calX_g$ by a fibrewise quotient-resolution procedure as above. Moreover, the $\calY_g$ are Calabi-Yau in most cases and have geometric properties that are closely related to those of the $\calX_g$. This gives a new perspective from which to study the geometry of the threefolds $\calX_g$, amongst them the quintic mirror.

Finally, we note that the construction of the threefolds $\calY_g$ is somewhat interesting in its own right, as they are all constructed from a single, rigid Calabi-Yau threefold. This rigid Calabi-Yau threefold is in turn built from a well-known extremal rational elliptic surface, using a method originally due to Schoen \cite{ofpress} that was later extended by Kapustka and Kapustka \cite{fpessakf}.
\medskip

The structure of this paper is as follows. In Section \ref{backgroundsect} we review some background material,  mostly taken from \cite{milpk3s} and \cite{ngik3s}, about $M$-polarized K3 surfaces, and describe the threefolds $\calX$ that are fibred by them. Then, in Section \ref{sect:threefoldkummer}, we develop the theory required to construct the associated threefolds fibred by Kummer surfaces $\calY$, and describe their construction in general terms. This construction proceeds by first \emph{undoing the Kummer construction}, as originally described in \cite[Section 4.3]{flpk3sm}, then running a generalized version of the \emph{forward construction} from \cite[Section 7]{14thcase}.

Finally, in Section \ref{CYsect}, we specialize the entire discussion to the case where $\calX$ is one of the Calabi-Yau threefolds fibred by quartic mirror K3 surfaces $\calX_g$ constructed in \cite{cytfmqk3s}. In this case we can construct the associated threefolds fibred by Kummer surfaces $\calY_g$ completely explicitly as pull-backs of a special threefold $\calY_2$. This special threefold is constructed in turn as a resolved quotient of a rigid Calabi-Yau threefold $\calA_2$, which is described in Section \ref{section:undokummer}. The properties of $\calA_2$ are then studied in Section \ref{A2sect}, after which we carefully describe the quotient-resolution procedure used to obtain $\calY_2$ from it in Sections \ref{sect:D8action} and \ref{singfibsect}. The method for constructing the $\calY_g$ from $\calY_2$ is detailed in Section \ref{cy3constsect}, and some of the properties of the $\calY_g$ are computed in Section \ref{propertiessect}.

\section{Background material} \label{backgroundsect}

We begin by setting up some notation. Let $\calX$ be a smooth projective threefold that admits a fibration $\calX \to B$ by K3 surfaces over a smooth curve. Let $N := \NS(X_p)$ denote the N\'{e}ron-Severi group of the fibre of $\calX$ over a general point $p \in B$. Suppose that there exists a primitive lattice embedding $M \hookrightarrow N$ of the lattice $M:= H \oplus E_8 \oplus E_8$ into $N$ (we will assume that such an embedding has been fixed in what follows).

Denote the open set over which the fibres of $\calX$ are smooth K3 surfaces by $U \subset B$ and let $\calX_U \to U$ denote the restriction of $\calX$ to $U$. Suppose further that $\calX_U \to U$ is an $N$-polarized family of K3 surfaces, in the sense of \cite[Definition 2.1]{flpk3sm}.

\begin{remark} Nineteen such fibrations are known on Calabi-Yau threefolds $\calX$ with $h^{2,1}(\calX) = 1$; these are summarized by \cite[Table 5.1]{flpk3sm}. Moreover, a large class of additional examples of Calabi-Yau threefolds fibred by K3 surfaces polarized by the lattice $M_2 := H \oplus E_8 \oplus E_8 \oplus \langle -4 \rangle$ are constructed in \cite{cytfmqk3s}; we will return to these examples in Section \ref{CYsect}.
\end{remark}

\subsection{$M$-polarized K3 surfaces} \label{sect:Mpol}

By assumption, a general fibre $X_p$ of $\calX$ is an $M$-polarized K3 surface. We recall here some basic properties of $M$-polarized K3 surfaces, that will be used repeatedly in what follows. In this section we will denote an $M$-polarized K3 surface by $(X,i)$, where $X$ is a K3 surface and $i$ is a primitive lattice embedding $i \colon M \hookrightarrow \mathrm{NS}(X)$.

Building upon work of Inose \cite{desk3sni}, Clingher, Doran, Lewis and Whitcher \cite{nfk3smmp} have shown that $M$-polarized K3 surfaces have a coarse moduli space given by the locus $d \neq 0$ in the weighted projective space $\WP(2,3,6)$ with weighted coordinates $(a,b,d)$. Thus, by normalizing $d = 1$, we may associate a pair of complex numbers $(a,b)$ to an $M$-polarized K3 surface $(X,i)$.

The first piece of structure that we need on $(X,i)$ comes from the work of Morrison \cite{k3slpn}, who showed that the composition of $i$ with the canonical embedding $E_8 \oplus E_8 \hookrightarrow M$ defines a canonical \emph{Shioda-Inose structure} on $(X,i)$ (named for Shioda and Inose \cite{sk3s}, who were the first to study such structures). By definition, such a structure consists of a Nikulin involution $\beta$ on $X$, such that the resolved quotient $Y = \widetilde{X/\beta}$ is a Kummer surface and there is a Hodge isometry $ T_Y \cong T_X(2)$, where $T_X$ and $T_Y$ denote the transcendental lattices of $X$ and $Y$ respectively, and $T_X(2)$ indicates that the bilinear pairing on $T_X$ has been multiplied by $2$.

By \cite[Theorem 3.13]{milpk3s}\footnotemark[1], we see that in our setting $Y$ is isomorphic to the Kummer surface $\Kum(A)$, where $A \cong E_1 \times E_2$ is an Abelian surface that splits as a product of elliptic curves. By \cite[Corollary 4.2]{milpk3s}\footnotemark[1] the $j$-invariants of these elliptic curves are given by the roots of the equation
\[ j^2 - \sigma j + \pi = 0,\]
where $\sigma$ and $\pi$ are given in terms of the $(a,b)$ values associated to $(X,i)$ by $\sigma = a^3 - b^2+1$ and $\pi = a^3$. Label the exceptional $(-2)$-curves in $Y$ arising from the resolution of the singularities of $X/\beta$ by $\{F_1,\ldots,F_8\}$.
\footnotetext[1]{We note that equivalent results to those attributed to Clingher and Doran \cite{milpk3s} here were proved independently by Shioda \cite{kstcek3s}, using a slightly different characterization of $M$-polarized K3 surfaces as elliptically fibred K3 surfaces with section and two fibres of type $II^*$.}

There is one more piece of structure on $(X,i)$ that we will need in our discussion. By \cite[Proposition 3.10]{milpk3s}, the K3 surface $X$ admits two uniquely defined elliptic fibrations $\Theta_{1,2}\colon X \to \Proj^1$, the \emph{standard} and \emph{alternate fibrations}. We will be mainly concerned with the alternate fibration $\Theta_2$. This fibration has two sections, one singular fibre of type $I_{12}^*$ and, if $a^3 \neq (b \pm 1)^2$, six singular fibres of type $I_1$ \cite[Proposition 4.6]{milpk3s}. 

The alternate fibration $\Theta_2$ is preserved by the Nikulin involution $\beta$, so induces an elliptic fibration $\Psi\colon Y \to \Proj^1$ on the Kummer surface $Y$. The two sections of $\Theta_2$ are identified to give a section $S$ of $\Psi$, and $\Psi$ has one singular fibre of type $I_{6}^*$ and, if $a^3 \neq (b \pm 1)^2$, six $I_2$'s \cite[Proposition 4.7]{milpk3s}. 

\begin{remark}\label{Nikulinrem1} As noted in the introduction, this construction is completely reversible. Clingher and Doran \cite[Section 1]{ngik3s} identify a second distinguished section $S'$ of the fibration $\Psi$, along with a uniquely defined Nikulin involution $\beta'$ on $Y$ that preserves $\Psi$ and takes $S$ to $S'$. The resolved quotient $\widetilde{Y / \beta'}$ is then isomorphic to $X$, and $\Psi$ induces the alternate fibration $\Theta_2$ on $X$. \end{remark}

The locations of the $I_2$ fibres in $\Psi$ are given by \cite[Proposition 4.7]{milpk3s}. They occur at the roots of the polynomials $(P(x) \pm 1)$, where $P$ is the cubic equation
\begin{equation} \label{Pequation} P(x) := 4x^3 - 3ax - b, \end{equation}
$(a,b)$ are the modular parameters associated to $X$, and $x$ is an affine coordinate on $\Proj^1$ chosen so that the $I_6^*$ fibre occurs at $x = \infty$.

Finally, using this information we may identify some of the $(-2)$-curves $F_i$ in $Y$. By the discussion in \cite[Section 4.3]{flpk3sm}, $\{F_3,F_4,F_5\}$ (resp.$\{F_6,F_7,F_8\}$) are the $(-2)$-curves in the $I_2$ fibres lying over the roots of $(P(x)-1)$ (resp. $(P(x)+1)$) that do not meet the section $S$ (this labelling may seem arbitrary, but in fact is chosen to match with that used in \cite[Section 4.3]{flpk3sm}).

\section{Threefolds fibred by Kummer surfaces} \label{sect:threefoldkummer}

We will now apply this theory to study the K3-fibred threefold $\calX \to B$. Via the embedding $M \hookrightarrow N$, we see that a general fibre of $\calX$ is an $M$-polarized K3 surface. Thus, by the discussion in Section \ref{sect:Mpol}, there is a canonical Shioda-Inose structure on such a fibre, which defines a Nikulin involution on it. This involution extends uniquely to all fibres of $\calX_U$ by \cite[Corollary 2.12]{flpk3sm}. 

Let $\calY_U \to U$ denote the family obtained by taking the quotient of $\calX_U$ by this involution and resolving the resulting singularities. The discussion from Section \ref{sect:Mpol} shows that the fibres of $\calY_U$ are Kummer surfaces $\Kum(E_1\times E_2)$ associated to products of elliptic curves $E_1 \times E_2$. Furthermore, the alternate fibration $\Theta_2$ on the fibres of $\calX_U$ induces a uniquely defined elliptic fibration $\Psi$ on the fibres of $\calY_U$.

\begin{remark} \label{Nikulinrem2} As in the K3 surface case, this construction turns out to be reversible. Let $\beta'$ be the Nikulin involution on a general fibre of $\calY_U$, as described in Remark \ref{Nikulinrem1}. By the description of the action of monodromy in $U$ from \cite[Section 4.3]{flpk3sm} and the description of $\beta'$ from \cite[Section 1]{ngik3s}, it can be shown that the action of $\beta'$ and the action of monodromy on the N\'{e}ron-Severi lattice of a general fibre commute. So, by \cite[Proposition 2.11]{flpk3sm}, $\beta'$ extends to a involution on $\calY_U$, the resolved quotient by which is isomorphic to $\calX_U$. \end{remark}

Our aim is to explicitly construct K3-fibred threefolds $\calY$ over $B$, so that the restriction of $\calY$ to the open set $U \subset B$ is isomorphic to $\calY_U$, and to study their properties.

\subsection{Undoing the Kummer construction}

In order to do this, the first step is to \emph{undo the Kummer construction} for $\calY_U$, i.e. to find a family of Abelian surfaces $\calA_U \to U$ which gives rise to $\calY_U$ upon fibrewise application of the Kummer construction. To do this, we will use the results from \cite[Section 4.3]{flpk3sm}. However, in order to apply these results we need to make the following assumption \cite[Assumption 4.6]{flpk3sm}; unless otherwise stated, we will make this assumption throughout the remainder of this section.

\begin{assumption} \label{I2ass} The fibration $\Psi$ on a general fibre $Y_p$ of $\calY_U$ has six singular fibres of type $I_2$.
\end{assumption}

\begin{remark} \label{I2rem} Note that each $I_2$ fibre in the fibration $\Psi$ on $Y_p$ arises as the total transform of an $I_1$ fibre in the alternate fibration $\Theta_2$ on $X_p$. Thus to check that Assumption \ref{I2ass} is satisfied, it is equivalent to show that the alternate fibration $\Theta_2$ on a general fibre $X_p$ of $\calX_U$ has six singular fibres of type $I_1$

This latter condition is easy to check numerically from the $(a,b)$ parameters associated to $X_p$. Indeed, the locations of the $I_1$ fibres in $\Theta_2$ are given by the roots of the polynomials $(P(x) \pm 1)$, where $P(x)$ is defined by Equation \eqref{Pequation}, which are all distinct if and only if $a^3 \neq (b \pm 1)^2$.
\end{remark}

Unfortunately, by the discussion in \cite[Section 4.3]{flpk3sm}, it is not always possible to undo the Kummer construction on $\calY_U$ directly. Instead, we must pull everything back to a cover $f\colon C \to B$.

This cover is constructed by the method described in \cite[Section 4.3]{flpk3sm}. Let $p \in U$ be a point and consider the six divisors $\{F_3,\ldots,F_8\}$ in the fibre $Y_p$ of $\calY$ over $p$. Monodromy in $U$ preserves the fibration $\Psi$ along with its section $S$ (as both are induced from the structure of the alternate fibration $\Theta_2$ on the fibres of $\calX_U$), so must act to permute the $F_i$. We thus have a homomorphism $\rho\colon \pi_1(U,p) \to S_6$; call its image $G$. Then define an unramified $|G|$-fold cover $f \colon V \to U$ as follows: the preimages of $p \in U$ are labelled by permutations in $G$ and, if $\gamma$ is a loop in $U$, monodromy around $f^{-1}(\gamma)$ acts on these labels as composition with $\rho(\gamma)$. This cover extends uniquely to a cover $f\colon C \to B$, with ramification over the points in $B-U$.

Let  $\calY'_V$ denote the pull-back of $\calY_U$ to $V$. Then \cite[Theorem 4.11]{flpk3sm} shows that we can undo the Kummer construction for $\calY'_V$, so there exists a family of Abelian surfaces $\calA_V \to V$ that gives rise to $\calY'_V$ under fibrewise application of the Kummer construction.

We have the following diagram:
\[\xymatrixcolsep{3pc}\xymatrix{ \calA_V \ar@{-->}[r]^{\mathrm{Kummer}} \ar[d] & \calY'_V \ar[r] \ar[d] & \calY_U \ar[d] & \calX_U \ar[d] \ar@{-->}[l]_{\mathrm{Nikulin}} \ar@{^{(}->}[r] & \calX \ar[d] \\
V \ar@{=}[r] & V \ar[r]^f & U \ar@{=}[r] & U \ar@{^{(}->}[r] & B
}\]

\subsection{The forward construction} \label{forwardconst}

Our next aim is to construct threefolds $\calA$, $\calY'$ and $\calY$ that agree with $\calA_V$, $\calY'_V$ and $\calY_U$ over $V$ and $U$ respectively. This construction will generalize the \emph{forward  construction} of \cite[Section 7]{14thcase}.

We begin by constructing a threefold fibred by Abelian surfaces $\calA \to C$ that agrees with $\calA_V$ over $V$. The first step is to identify some special divisors on the fibres of $\calY'_V$.

Recall that a fibre of $\calY'_V$ is isomorphic to $\mathrm{Kum}(E_1 \times E_2)$, where $E_1$ and $E_2$ are elliptic curves. There is a special configuration of twenty-four $(-2)$-curves on $\mathrm{Kum}(E_1 \times E_2)$ arising from the Kummer construction, that we shall now describe (here we note that we use the same notation as \cite[Definition 3.18]{milpk3s}, but with the roles of $G_i$ and $H_j$ reversed).

Let $\{x_0,x_1,x_2,x_3\}$ and $\{y_0,y_1,y_2,y_3\}$ denote the two sets of points of order two on $E_1$ and $E_2$ respectively. Denote by $G_i$ and $H_j$ ($0 \leq i,j \leq 3$) the $(-2)$-curves on $\mathrm{Kum}(E_1 \times E_2)$ obtained as the proper transforms of $E_1 \times \{y_i\}$ and $\{x_j\} \times E_2$ respectively. Let $E_{ij}$ be the exceptional $(-2)$-curve on $\mathrm{Kum}(E_1 \times E_2)$ associated to the point $(x_j,y_i)$ of $E_1 \times E_2$. This gives $24$ curves, which have the following intersection numbers:

\begin{align*}
G_i.H_j &= 0, \\
G_k.E_{ij} &= \delta_{ik}, \\
H_k.E_{ij} &= \delta_{jk}.
\end{align*}

\begin{definition} The configuration of twenty-four $(-2)$-curves 
\[ \{G_i,H_j,E_{ij} \mid 0 \leq i,j \leq 3 \}\]
is called a \emph{double Kummer pencil} on $\mathrm{Kum}(E_1 \times E_2)$.
\end{definition}

With this in place, we can prove an analogue of \cite[Lemma 7.4 and Proposition 7.5]{14thcase}. 

\begin{proposition} \label{ellsurfprop} $\calA_V \to V$ is isomorphic over $V$ to a fibre product $\calE_1 \times_C \calE_2$ of minimal elliptic surfaces $\calE_i \to C$ with section. Furthermore, the $j$-invariants of the elliptic curves $E_1$ and $E_2$ forming the fibres of $\calE_1$ and $\calE_2$ over a point $p \in C$ are given by the roots of the equation
\begin{equation}\label{jequation} j^2 - \sigma(p) j + \pi(p) = 0, \end{equation}
where $\sigma(p)$ and $\pi(p)$ are the $\sigma$ and $\pi$ invariants associated to the $M$-polarized K3 surface fibre $X_{f(p)}$ of $\calX_U$ over $f(p)$. 
\end{proposition}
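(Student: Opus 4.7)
The plan is to follow the pattern of \cite[Lemma 7.4 and Proposition 7.5]{14thcase}, whose hypotheses were more restrictive, and then recover the $j$-invariants via the formulas of Section \ref{sect:Mpol}. Fibrewise, the structure is clear: each fibre of $\calA_V$ is an Abelian surface $A_p$ with $\mathrm{Kum}(A_p) \cong Y'_{f(p)} \cong \mathrm{Kum}(E_1 \times E_2)$, where (by the discussion of Shioda-Inose structures in Section \ref{sect:Mpol}) the elliptic factors $E_1$, $E_2$ have $j$-invariants given by the roots of $j^2 - \sigma(p) j + \pi(p) = 0$ with $\sigma = a^3 - b^2 + 1$ and $\pi = a^3$ (where $(a,b)$ are the parameters of $X_{f(p)}$). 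So the claim about $j$-invariants reduces to \cite[Corollary 4.2]{milpk3s} once the product structure is in place, and the content of the proposition is really the global product structure $\calE_1 \times_C \calE_2$.

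To produce this, I would first explain how the cover $V \to U$ is set up to make the double Kummer pencil globally well-defined on $\calY'_V$. Recall from Section \ref{sect:Mpol} that $\{F_3,\ldots,F_8\}$ are the non-section components of the six $I_2$ fibres of $\Psi$, and that these six curves get identified under the Kummer construction with the $(-2)$-curves obtained from resolving the six non-identity two-torsion fixed points $(x_j,y_i)$ with $(i,j) \neq (0,0)$; they thus distinguish the pair of two-torsion subgroups $\{x_0,\ldots,x_3\}$ and $\{y_0,\ldots,y_3\}$ on $E_1 \times E_2$. By construction, the monodromy of $\pi_1(V,p)$ acts trivially on $\{F_3,\ldots,F_8\}$, hence trivially on the splitting of the two-torsion of $A_p$. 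Consequently the double Kummer pencil $\{G_i,H_j,E_{ij}\}$ extends to a monodromy-invariant family of $(-2)$-curves on the fibres of $\calY'_V$, giving two globally defined elliptic pencils whose general fibres are the proper transforms of $E_1 \times \{y_i\}$ and $\{x_j\} \times E_2$ respectively.

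Using these two pencils, I would construct elliptic surfaces $\calE_{i,V} \to V$ ($i=1,2$) with section, by taking the quotient of $\calA_V$ by the subgroup scheme of the Abelian scheme given by the complementary elliptic factor; each of these inherits a section from the zero section of $\calA_V$. Then the natural map $\calA_V \to \calE_{1,V} \times_V \calE_{2,V}$ is an isomorphism fibrewise, hence an isomorphism, because each fibre $A_p \cong E_1 \times E_2$ splits as a product according to the double Kummer pencil identified above. Finally, each $\calE_{i,V} \to V$ admits a unique extension to a minimal elliptic surface $\calE_i \to C$ with section (minimal models of elliptic surfaces with section are unique), and this gives the required $\calE_1 \times_C \calE_2$.

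The main obstacle is the middle step: verifying that monodromy on $\pi_1(V,p)$ really does preserve the full product decomposition of $A_p$, not merely permute some of the curves in the double Kummer pencil among themselves. This requires combining the monodromy description of \cite[Section 4.3]{flpk3sm} (which controls the action on $\{F_3,\ldots,F_8\}$) with the description of the double Kummer pencil in terms of two-torsion \cite[Definition 3.18]{milpk3s}, to conclude that the two elliptic pencils on the fibres, and in particular the assignment of the two factors $E_1$ and $E_2$, are globally defined on $\calY'_V$. Once this is checked, identifying $\calA_V$ with the fibre product and extending to $C$ is essentially formal.
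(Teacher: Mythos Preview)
Your approach is essentially the same as the paper's, but you are making the ``main obstacle'' harder than it is. The paper resolves it in one line by invoking \cite[Theorem 4.11]{flpk3sm}: that result says not only that the Kummer construction can be undone over $V$, but that $\calY'_V$ is $\NS(Y'_p)$-polarized. Hence the \emph{entire} double Kummer pencil $\{G_i,H_j,E_{ij}\}$ is monodromy-invariant on $V$, and there is no need to bootstrap from the action on $\{F_3,\ldots,F_8\}$ alone. Once you have this, the curves $G_1$, $H_1$, and $E_{11}$ sweep out global divisors on $\calY'_V$; passing to $\calA_V$, the first two become the elliptic surfaces $\calE_{1,V}$, $\calE_{2,V}$ and the third contracts to a section $s$ along which they meet. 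The extension to minimal $\calE_i \to C$ is then \cite[Theorem 2.5]{nak}, and the fibre product description and the $j$-invariant formula follow.

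Two minor remarks on your variant: constructing $\calE_{i,V}$ as quotients of $\calA_V$ by the complementary factor presupposes that the two elliptic subgroup schemes are globally defined on $\calA_V$, which is precisely the monodromy-invariance statement above, so nothing is gained; the paper's construction via the swept-out $G_1$, $H_1$ is more direct. Likewise, your appeal to ``the zero section of $\calA_V$'' is not quite free---$\calA_V$ was produced by undoing the Kummer construction and does not come with a preferred origin---whereas the paper obtains the section concretely from $E_{11}$.
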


\begin{remark} \label{distinctrem}  Thus, using the expressions for $(\sigma,\pi)$ in terms of $(a,b)$ from Section \ref{sect:Mpol}, we find that the discriminant of Equation \eqref{jequation} is
\[\sigma^2 - 4\pi = a^6 - 2a^3b - 2a^3 + b^4 - 2b^2 + 1 = (a^3-(b-1)^2)(a^3-(b+1)^2).\]
But Assumption \ref{I2ass} and Remark \ref{I2rem} imply that, for generic $p \in C$, this does not vanish, so the roots of Equation \eqref{jequation} are generically distinct.
\end{remark}

\begin{proof}[Proof of Proposition \ref{ellsurfprop}] We begin by showing that the fibration $\calA_V \to V$ has a section $s$. Construct a double Kummer pencil $\{G_i,H_j,E_{ij}\}$ on the fibre $Y'_p$ of $\calY'_V$ over $p \in V$ as described in \cite[Section 4.3]{flpk3sm}. By  \cite[Theorem 4.11]{flpk3sm}, $\calY'_{V}$ is $\NS(Y'_p)$-polarized, so the divisors in this pencil are invariant under monodromy around loops in $V$. In particular, the curve $E_{11}$ is invariant. So $E_{11}$ sweeps out a divisor in $\calY'_V$, which intersects each smooth fibre in a $(-2)$-curve. Passage to $\calA_V$ contracts this divisor to a curve which intersects each smooth fibre in a single point, i.e. a section over $V$.

Now, let $\calA_p$ denote the fibre of $\calA_V$ over $p$, which is isomorphic to a product $E_1 \times E_2$ of elliptic curves. We may identify $E_1$ (resp. $E_2$) with the preimages of the curve $G_1$ (resp. $H_1$) in the double Kummer pencil on $Y'_p$. As $G_1$ and $H_1$ are invariant under monodromy around loops in $V$, they sweep out two divisors in $Y'_V$. Upon passage to $\calA_V$ these two divisors become a pair of elliptic surfaces, $\calE_{1,V} \to V$ and $\calE_{2,V} \to V$, which intersect along the section $s$. By \cite[Theorem 2.5]{nak}, there are unique extensions of $\calE_{i,V} \to V$ to minimal elliptic surfaces $\calE_i \to C$ over $C$, for $i = 1,2$. By construction, we have an isomorphism over $V$ between $\calA_V$ and $\calE_1 \times_C \calE_2$, as required.

Finally, the statement about the $j$-invariants is an easy consequence of the discussion in Section \ref{sect:Mpol}. \end{proof}

Using this, we may construct a threefold $\calY' \to C$ that is isomorphic to $\calY'_V$ over $V$ by applying the Kummer construction to $\calE_1 \times_C \calE_2$ fibrewise. To further construct a model for $\calY_U$, we need to know how the group $G$ defining the cover $f$ acts on $\calE_1 \times_C \calE_2$.

\begin{lemma} \label{Gperm} Let $\varphi$ denote the action of a permutation in $G \subset S_6$ on $C$. Then either $\varphi$ induces automorphisms on $\calE_1$ and $\calE_2$, or $\varphi$ induces an isomorphism $\calE_1 \to \calE_2$.\end{lemma}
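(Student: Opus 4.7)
The plan is to lift $\varphi$ canonically to $\calY'_V$ and $\calA_V$ and then to study its action on the double Kummer pencil fibre-by-fibre. Since $\calY'_V$ is the pullback of $\calY_U$ along $f|_V$, the deck transformation $\varphi$ of $f\colon V\to U$ lifts tautologically to an automorphism of $\calY'_V$ covering it. The formation of $\calA_V$ from $\calY'_V$ via undoing the Kummer construction depends only on the configuration of twenty-four $(-2)$-curves of the double Kummer pencil in each fibre, and this lifted action preserves the pencil as a configuration, so we obtain an automorphism of $\calA_V$ covering $\varphi$ as well.

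The key step is the combinatorial claim that, on each fibre, the induced action on the double Kummer pencil either preserves the partition $\{G_0,\ldots,G_3\}\sqcup\{H_0,\ldots,H_3\}$ or swaps its two halves. By the construction of \cite[Section 4.3]{flpk3sm}, the double Kummer pencil on $Y'_p$ is determined canonically by the labelled curves $F_3,\ldots,F_8$, and $\varphi$ acts on these by the associated permutation in $G\subset S_6$. The induced permutation of the twenty-four pencil curves preserves all the intersection numbers listed above, and the eight curves $\{G_i,H_j\}$ together with the sixteen $E_{ij}$ form a bipartite $K_{4,4}$ incidence pattern with the $G_i$'s and $H_j$'s as the two sides of the bipartition; so any incidence-preserving permutation must either fix this bipartition or interchange its two halves.

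In the first case, $\varphi$ permutes the $G_i$'s among themselves and likewise for the $H_j$'s, so on $\calA_V$ it preserves each of the two projections $\calA_V\to\calE_{i,V}$ (coming fibrewise from $E_1\times E_2\to E_i$), and therefore descends to automorphisms of both $\calE_{1,V}$ and $\calE_{2,V}$; in the second case these two projections are swapped, so $\varphi$ descends to an isomorphism $\calE_{1,V}\to\calE_{2,V}$. In either case the unique extension to minimal elliptic surfaces \cite[Theorem 2.5]{nak} promotes this to the desired automorphism or isomorphism of $\calE_1$ and $\calE_2$ covering $\varphi\colon C\to C$. The main obstacle is the bipartition claim; an attractive alternative is to argue directly from Equation \eqref{jequation}, whose two roots give the $j$-invariants of the two elliptic factors and so must be permuted by the Galois action of $\varphi$, though converting that statement from one about abstract $j$-invariants to one about the elliptic surfaces themselves ultimately requires comparable input.
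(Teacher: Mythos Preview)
Your argument is correct in outline but takes a different route from the paper. The paper's proof is more targeted: it invokes \cite[Lemma 4.5]{flpk3sm} to conclude directly that the single curve $E_{11}$ is monodromy-invariant in $U$, hence the section $s$ of $\calA_V$ is preserved by $\hat\varphi$; then, since $\hat\varphi(\calE_{1,V})$ is an elliptic surface in $\calA_V$ passing through $s$, and on a general fibre $E_1\times E_2$ the only elliptic curves through the origin are the two factors, $\hat\varphi(\calE_{1,V})$ is forced to equal $\calE_{1,V}$ or $\calE_{2,V}$. The extension to the minimal models is then handled by \cite[Proposition II.1.2]{btes} (birational maps of minimal elliptic surfaces are isomorphisms), rather than the uniqueness statement from \cite{nak}. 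This avoids the full $K_{4,4}$ combinatorics and the question of whether $\varphi$ preserves the entire double Kummer pencil.

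Your bipartition argument works, but the step ``this lifted action preserves the pencil as a configuration'' needs more care than you give it: the construction of the \emph{labelled} pencil in \cite[Section 4.3]{flpk3sm} depends on the labels of the $F_i$, which $\varphi$ permutes, so a priori one obtains a relabelled pencil. What saves you is that the \emph{unlabelled} set of twenty-four curves is intrinsic to the Kummer structure on a generic fibre (the sixteen $E_{ij}$ span the Nikulin lattice, and the eight $G_i,H_j$ are then characterised by their incidences with the $E_{ij}$), so the underlying set is indeed $\varphi$-stable and your $K_{4,4}$ analysis applies. In short: both approaches succeed; the paper's is shorter because it leverages a ready-made invariance lemma and a single geometric observation about elliptic curves through a point on a product, while yours gives a more structural picture of the incidence geometry. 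Your closing remark about the $j$-invariant route is apt---that is exactly what the paper does in the subsequent Lemma~\ref{lemma:jtracking}, using the present lemma as input.
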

\begin{proof} Note first that $\varphi$ induces an automorphism $\hat{\varphi}$ of $\calA_V$. Furthermore, $\hat{\varphi}$ preserves the section $s$ as, by \cite[Lemma 4.5]{flpk3sm}, the curve $E_{11}$ in a general fibre $Y_p$ of $\calY_U$ is invariant under monodromy in $U$.

As in the proof of Proposition \ref{ellsurfprop}, we identify $\calE_{1,V}$ and $\calE_{2,V}$ with the elliptic surfaces in $\calA_V$ swept out by the preimages of the curves $G_1$ and $H_1$. These elliptic surfaces intersect along the section $s$. 

As $\hat{\varphi}$ preserves $s$, we see that $\hat{\varphi}(\calE_{1,V})$ is an elliptic surface in $\calA_V$ that contains $s$ as a section. It must therefore either be $\calE_{1,V}$ or $\calE_{2,V}$. The same holds for $\calE_{2,V}$. Thus, we see that $\varphi$ either induces automorphisms on $\calE_{1,V}$ and $\calE_{2,V}$, or induces an isomorphism $\calE_{1,V} \to \calE_{2,V}$.

Thus $\varphi$ induces either a birational automorphism on $\calE_1$ and $\calE_2$, or a birational map $\calE_{1} \to \calE_{2}$. But, by \cite[Proposition II.1.2]{btes}, a birational map between minimal elliptic surfaces is an isomorphism.\end{proof}

It is easy to determine which case of Lemma \ref{Gperm} occurs:

\begin{lemma} \label{lemma:jtracking} Let $\varphi$ denote the action of a permutation in $G \subset S_6$ on $C$. Then $\varphi$ induces automorphisms on $\calE_1$ and $\calE_2$ \textup{(}resp. $\varphi$ induces an isomorphism $\calE_1 \to \calE_2$\textup{)} if and only if the action of $\varphi$ preserves \textup{(}resp. exchanges\textup{)} the roots of Equation \eqref{jequation} \textup{(}which are generically distinct by Remark \ref{distinctrem}\textup{)}.
\end{lemma}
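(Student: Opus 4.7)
The plan is to exploit the functoriality of the $j$-invariant together with the fact that the coefficients $\sigma$ and $\pi$ of Equation \eqref{jequation} are pulled back from $B$. Since the cover $f\colon C \to B$ factors the $G$-action, we have $\varphi^{*}\sigma = \sigma$ and $\varphi^{*}\pi = \pi$ as functions on $C$. Consequently, the set of roots of Equation \eqref{jequation} at $p$ coincides with the set of roots at $\varphi(p)$, so $\varphi$ acts on the (generically two-element, by Remark \ref{distinctrem}) ordered pair $\bigl(j(E_{1}|_{p}), j(E_{2}|_{p})\bigr)$ either trivially or by transposition, depending on $p$.

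Next I would translate the two geometric alternatives of Lemma \ref{Gperm} into statements about $j$-invariants. If $\varphi$ induces automorphisms $\hat{\varphi}_{i}\colon \calE_{i} \to \calE_{i}$, then $\hat{\varphi}_{i}$ restricts to an isomorphism between the fibres $(E_{i})_{p}$ and $(E_{i})_{\varphi(p)}$, so $j\bigl((E_{i})_{p}\bigr) = j\bigl((E_{i})_{\varphi(p)}\bigr)$ for $i = 1,2$. In other words, $\varphi$ preserves each root of Equation \eqref{jequation} individually. If, instead, $\varphi$ induces an isomorphism $\calE_{1} \to \calE_{2}$, then it restricts to an isomorphism $(E_{1})_{p} \to (E_{2})_{\varphi(p)}$, giving $j\bigl((E_{1})_{p}\bigr) = j\bigl((E_{2})_{\varphi(p)}\bigr)$, so $\varphi$ exchanges the two roots. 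This proves both "only if" directions.

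For the converses I would use that Lemma \ref{Gperm} gives a strict dichotomy: for each $\varphi \in G$, exactly one of the two geometric alternatives occurs. Combined with Remark \ref{distinctrem}, which guarantees that the two roots are distinct at a generic $p \in C$, the two alternatives produce genuinely different behaviour on the pair of root functions. Therefore each alternative is forced by its corresponding condition on the roots, yielding both "if" directions.

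The only delicate point I anticipate is verifying that the "action of $\varphi$" on fibres really agrees on the two sides of the Kummer construction, i.e.\ that the automorphism $\hat{\varphi}$ of $\calA_{V}$ used in the proof of Lemma \ref{Gperm} is compatible with the geometric action of $\varphi$ on $C$ in the sense needed to identify fibres over $p$ and $\varphi(p)$. This, however, is built into the construction of $\calA_{V}$ as the undoing of the Kummer construction applied to $\calY'_{V}$, whose $G$-equivariance over $C$ is tautological. No other step requires more than bookkeeping.
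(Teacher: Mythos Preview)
Your proposal is correct and follows essentially the same approach as the paper's proof: both invoke the dichotomy of Lemma~\ref{Gperm}, note that the coefficients $\sigma,\pi$ of Equation~\eqref{jequation} depend only on $f(p)$ so that the unordered pair of roots is $\varphi$-invariant, and then match each geometric alternative with the corresponding action on the two $j$-functions, using Remark~\ref{distinctrem} to separate the cases. Your explicit separation into the ``only if'' direction (computed directly from the fibrewise isomorphisms) and the ``if'' direction (forced by the dichotomy plus generic distinctness of the roots) is slightly more carefully worded than the paper's compressed bullet list, but the argument is the same.
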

\begin{proof} By Lemma \ref{Gperm}, we know that  either $\varphi$ induces automorphisms on $\calE_1$ and $\calE_2$, or $\varphi$ induces an isomorphism $\calE_1 \to \calE_2$. To see which occurs, we study the action on a general fibre of $\calE_1$.

So let $E_i$ denote the fibre of $\calE_i$ over a general point $p \in V$ and let $E_i'$ denote the fibre of $\calE_i$ over $\varphi(p)$ (for $i \in \{1,2\}$). By Proposition \ref{ellsurfprop}, we see that the $j$-invariants of $\{E_1,E_2\}$ are equal to those of $\{E_1',E_2'\}$. Thus, either
\begin{itemize}
\item $j(E_1) = j(E_1')$ then $j(E_2) = j(E_2')$, so $E_1 \cong E_1'$ and $E_2\cong E_2'$ and $\varphi$ induces automorphisms on $\calE_1$ and $\calE_2$, or
\item $j(E_1) = j(E_2')$, so $E_1 \cong E_2'$ and $\varphi$ induces an isomorphism $\calE_1 \to \calE_2$.
\end{itemize}
But the $j$-invariants of $E_i$ and $E_i'$ are given by the roots of Equation \eqref{jequation}, so the first (resp. second) case occurs if and only if the action of $\varphi$ preserves (resp. exchanges) these roots. \end{proof}

Let $H \subset G$ denote the subgroup of $G$ that preserves the decomposition of $\{F_3,\ldots,F_8\}$ into subsets $\{F_3,F_4,F_5\}$ and $\{F_6,F_7,F_8\}$. Then we can say more about the action of the subgroup $H$ on $\calE_1$ and $\calE_2$.

\begin{proposition} \label{Eiperm} \textup{(See \cite[Lemmas 7.6 and 7.7]{14thcase})} Let $\tau$ be any permutation in $H \subset S_6$ and let $\varphi$ denote the corresponding map on $C$.  Then
\begin{itemize}
\item If $\tau$ is an odd permutation, then $\varphi$ induces an isomorphism $\calE_1 \to \calE_2$.
\item If $\tau$ is an even permutation, then $\varphi$ induces automorphisms of $\calE_1$ and $\calE_2$
\end{itemize}\end{proposition}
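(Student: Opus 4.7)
The plan is to use Lemma~\ref{lemma:jtracking} to reduce the claim to tracking whether $\varphi$ exchanges the two roots $j_1, j_2$ of Equation~\eqref{jequation}, and then to detect this exchange via the sign picked up by $\sqrt{\sigma^2 - 4\pi}$ (which equals $\pm(j_1 - j_2)$) under the monodromy associated to $\tau$.

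From Remark~\ref{distinctrem} we have $\sigma^2 - 4\pi = (a^3 - (b-1)^2)(a^3 - (b+1)^2)$. A direct computation with the standard cubic discriminant formula shows that these two factors are, up to a common nonzero scalar, the discriminants of $P(x) - 1$ and $P(x) + 1$ respectively. Writing $\{\alpha_1, \alpha_2, \alpha_3\}$ and $\{\beta_1, \beta_2, \beta_3\}$ for the roots of $P(x) - 1$ and $P(x) + 1$, this yields
\[ \sqrt{\sigma^2 - 4\pi} = c \prod_{i<j}(\alpha_i - \alpha_j) \prod_{i<j}(\beta_i - \beta_j) \]
for some nonzero constant $c$.

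By the description at the end of Section~\ref{sect:Mpol}, the $\alpha_i$ (resp. $\beta_j$) are precisely the base-points on $\Proj^1$ of the $I_2$ fibres of $\Psi$ containing $F_3, F_4, F_5$ (resp. $F_6, F_7, F_8$); hence the monodromy action of $\tau \in H$ on these curves corresponds to permuting each triple of base-points separately via some pair $(\sigma_1, \sigma_2) \in S_3 \times S_3$. Each Vandermonde factor then transforms by the sign of the corresponding $\sigma_i$, so the displayed product changes sign under $\tau$ exactly when $\mathrm{sign}(\sigma_1)\mathrm{sign}(\sigma_2) = -1$, which is precisely the sign of $\tau$ as an element of $S_6$.

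Putting these observations together, $\sqrt{\sigma^2 - 4\pi}$ acquires a minus sign under $\tau$ if and only if $\tau$ is odd, which by the reduction above is equivalent to $\varphi$ inducing an isomorphism $\calE_1 \to \calE_2$; otherwise $\varphi$ induces automorphisms of $\calE_1$ and $\calE_2$. The main point requiring care is the discriminant identity expressing $\sigma^2 - 4\pi$ as a scalar multiple of $\mathrm{disc}(P-1) \cdot \mathrm{disc}(P+1)$, which though elementary must be computed without sign errors; the subsequent Vandermonde sign-tracking is purely combinatorial, and I do not foresee further obstacles.
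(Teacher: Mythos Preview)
Your argument is correct and follows essentially the same route as the paper: both reduce via Lemma~\ref{lemma:jtracking} to tracking whether monodromy swaps the roots of Equation~\eqref{jequation}, and both detect this by identifying $\sigma^2 - 4\pi$ with (a scalar multiple of) the product of the discriminants of $P(x)\pm 1$. The only cosmetic difference is that the paper phrases the sign change topologically (the discriminant $\Delta$ vanishes to odd order inside the loop $f(\gamma)$), whereas you phrase it algebraically via the Vandermonde products; these are equivalent, and your version arguably makes the parity bookkeeping more transparent.
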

\begin{proof} Suppose first that $\tau$ is an odd permutation.  Let $\gamma$ denote a path in $V$ that connects a point $p \in V$ to $\varphi(p)$. We will show that as we move along $\gamma$, the $j$-invariants of $E_1$ and $E_2$ are switched. To do this we use $f$ to push everything down to $B$. The image $f(\gamma)$ is a loop in $U$ starting and ending at $f(p)$. By Lemma \ref{lemma:jtracking}, we therefore need to show that monodromy around $f(\gamma)$ switches the roots of Equation \eqref{jequation}.

Monodromy around $f(\gamma)$ acts on the set of divisors $\{F_3,\ldots,F_8\}$ in the fibre $Y_{f(p)}$ of $\calY$ over $f(p)$ (see Section \ref{sect:Mpol}) as the permutation $\tau$. Furthermore, as $H$ is the subgroup of $G$ that preserves the sets $\{F_3,F_4,F_5\}$ and $\{F_6,F_7,F_8\}$, monodromy around $f(\gamma)$ must also preserve these sets.

As these divisors are permuted if and only if the roots of the cubic polynomials $(P(x) \pm 1)$ (see Equation \eqref{Pequation}) are permuted and as $\tau$ is an odd permutation, we see that the product of the discriminants of these cubics must vanish to odd order inside $\gamma$. This product is given by
\[\Delta := a^6 - 2a^3b^2 + b^4 - 2a^3 - 2b^2 +1,\]
where $a$ and $b$ are the $(a,b)$-parameters associated to the $M$-polarized fibre $X_{f(p)}$ of $\calX$ over $f(p)$.

Now, monodromy around $f(\gamma)$ switches the roots of Equation \eqref{jequation} if and only if its discriminant $(\sigma^2 - 4\pi)$ vanishes to odd order inside $\gamma$. However, by Remark \ref{distinctrem}, we find that this discriminant is given exactly by $\Delta$. So monodromy around $f(\gamma)$ switches the roots of Equation \eqref{jequation} and thus, by Lemma \ref{lemma:jtracking}, it induces an isomorphism $\calE_1 \to \calE_2$.

This completes the proof in the case when $\tau$ is an odd permutation. The proof when $\tau$ is an even permutation is similar.\end{proof}

To construct a model for $\calY_U$, our starting data consists of the cover $f\colon C \to B$ and the two elliptic surfaces $\calE_{1,2} \to C$. This data must satisfy the condition that the deck transformation group $G$ of the cover $f$ should act as automorphisms on or isomorphisms between the elliptic surfaces $\calE_1$ and $\calE_2$, in a way that is compatible with its action on $C$.

We begin by constructing a model for $\calY'_V$ by performing the Kummer construction fibrewise on $\calE_1 \times_C \calE_2$ to obtain a new threefold $\calY'$, which is isomorphic to $\calY'_V$ over $V$. Then, to obtain a model for $\calY_U$, we perform a quotient of $\calY'$ by $G$. However, the action of $G$ is not the obvious one induced by the action of $G$ on $\calE_1$ and $\calE_2$ (if it were, we would be able to undo the Kummer construction on $\calY_U$, which is not possible in general). Instead, we compose this action with the fibrewise automorphism induced by the action of $G$ (as a subset of $S_6$) on the set of curves $\{F_3,\ldots,F_8\}$.

\begin{remark} \label{dkrem} As the fibrewise Kummer construction defines a natural double Kummer pencil on smooth fibres of $\calF$, we can use the results of Kuwata and Shioda \cite[Section 5.2]{epdeefks} to define the elliptic fibration $\Psi$ on the smooth fibres of $\calF$. The curves $\{F_3,\ldots,F_8\}$ are then the components of the $I_2$ fibres that do not meet a chosen section.

Once the curves $\{F_3,\ldots,F_8\}$ are known, the automorphisms permuting them may be computed explicitly as compositions of the relevant symplectic automorphisms from \cite[Section 4]{agksaptec}.\end{remark}

Quotienting by this $G$-action, we obtain a new threefold $\calY \to B$. By construction, $\calY$ is isomorphic to $\calY_U$ over $U$, as required. We have a diagram:
\[\xymatrixcolsep{3pc}\xymatrix{ \calE_1 \times_C \calE_2 \ar@{-->}[r]^-{\mathrm{Kummer}} & \calY' \ar[r] & \calY & \calX \ar@{-->}[l]_{\mathrm{Nikulin}} \\
 \calA_V \ar[r] \ar@{^{(}->}[u] & \calY'_V \ar[r]\ar@{^{(}->}[u] & \calY_U \ar@{^{(}->}[u] & \calX_U  \ar@{-->}[l] \ar@{^{(}->}[u] 
}\]
This construction will be illustrated in the next section.

\section{Some Calabi-Yau threefolds fibred by Kummer surfaces}\label{CYsect}

In the remainder of this paper, we will illustrate how these methods can be used to construct explicit examples of Calabi-Yau threefolds. We note that, in this paper, a \emph{Calabi-Yau threefold} will always be a smooth projective threefold $\calX$ with $\omega_{\calX} \cong \calO_{\calX}$ and $H^1(\calX,\calO_{\calX})=0$. We further note that the cohomological condition in this definition implies that any fibration of a Calabi-Yau threefold by K3 surfaces must have base curve $\Proj^1$, so from this point we restrict our attention to the case where $B \cong \Proj^1$.

As our starting point, we will take the K3-fibred threefolds $\calX_g \to \Proj^1$ constructed in \cite{cytfmqk3s}. By construction, the N\'{e}ron-Severi group of a general fibre in these threefolds is isometric to $M_2 = H \oplus E_8 \oplus E_8 \oplus \langle -4 \rangle$, which admits an obvious embedding $M \hookrightarrow M_2$, and the restriction $\calX_U \to U$ of $\calX_g$ to the subset $U \subset \Proj^1$ over which the fibres are smooth is an $M_2$-polarized family of K3 surfaces. Thus these threefolds satisfy all of the conditions of Section \ref{backgroundsect}.

\subsection{A special family} \label{specialsect}

In \cite{cytfmqk3s}, the threefolds $\calX_g$ are constructed as resolved pull-backs of a special family $\calX_2 \to \calM_{M_2}$ over the (compact) $1$-dimensional moduli space $\calM_{M_2}$ of $M_2$-polarized K3 surfaces, by a map $g \colon \Proj^1 \to \calM_{M_2}$. To study the threefolds related to the $\calX_g$ by the construction detailed above, we will begin by studying $\calX_2$.

The family $\calX_2 \to \calM_{M_2}$ is described in \cite[Section 5.4.1]{flpk3sm}. It is given as the minimal resolution of the family of hypersurfaces in $\Proj^3$ obtained by varying $\lambda$ in the following expression
\begin{equation} \label{x2eq} \lambda w^4 + xyz(x+y+z-w) = 0.\end{equation}
This family has also been studied extensively by Narumiya and Shiga \cite{mmfk3sis3drp}, we will use some of their results in the sequel (note, however, that our $\lambda$ is not the same as the $\lambda$ used in \cite{mmfk3sis3drp}, instead, our $\lambda$ is equal to $\mu^4$ or $\frac{u}{256}$ from \cite{mmfk3sis3drp}).

Dolgachev \cite[Theorem 7.1]{mslpk3s} proved that $\calM_{M_2}$ is isomorphic to the compactification of the modular curve $\Gamma_0(2)^+ \setminus \mathbb{H}$. In \cite[Section 5.4.1]{flpk3sm} it is shown that the orbifold points of orders $(2,4,\infty)$ on this curve occur at $\lambda = (\frac{1}{256},\infty,0)$ respectively, and that the K3 fibres of $\calX_2$ are smooth away from these three points. Let $U_{M_2}$ denote the open set obtained from $\calM_{M_2}$ by removing these three points. Then the restriction $\calX_{2,U}$ of $\calX_2$ to $U_{M_2}$ is an $M_2$-polarized family of K3 surfaces, and $\calX_2$ satisfies all of the conditions of Section \ref{backgroundsect}.

We next compute the $(a,b,d)$-parameters of the fibres of $\calX_{2,U}$ (considered as $M$-polarized K3 surfaces, see Section \ref{sect:Mpol}). To do this, we use the fact that the standard and alternate fibrations on the K3 fibres are torically induced, so their $g_2$ and $g_3$ invariants may be computed (in terms of $\lambda$) using the toric geometry functionality of the computer software \emph{Sage}. These expressions can then be compared to the corresponding expressions computed for an $M$-polarized K3 surface in normal form (see, for instance, \cite[Theorem 3.1]{nfk3smmp}). We thus obtain
\begin{gather*}
a = \lambda + \frac{1}{2^43^2}
,\qquad
b = \frac{3}{8} \lambda - \frac{1}{2^63^3}
,\qquad
d = \lambda^3,
\end{gather*}
The $\sigma$ and $\pi$ invariants for the family $\calX_{2,U}$ are then given in terms of $\lambda$ by
\begin{align*}
\sigma &:= 2 - \frac{23}{2^63\lambda} + \frac{1}{2^63^3\lambda^2},\\
\pi &:= \left( 1 + \frac{1}{2^43^2\lambda}\right)^3.
\end{align*}

\subsection{Undoing the Kummer construction} \label{section:undokummer}

Let $\calY_{2,U} \to U_{M_2}$ denote the family of Kummer surfaces obtained from $\calX_2$ by quotienting by the Nikulin involution and resolving any resulting singularities. From Remark \ref{I2rem} and the $(a,b,d)$ parameters for $\calX_2$ computed above, we see that Assumption \ref{I2ass} is satisfied by $\calY_{2,U}$. We will explicitly show how to construct a model for $\calY_{2,U}$ from elliptic surfaces, as described in Section \ref{forwardconst}.

Our first step is to undo the Kummer construction for $\calY_{2,U} \to U_{M_2}$. To do this, we proceed to a cover $f\colon C_{M_2} \to \calM_{M_2}$, as computed in \cite[Section 5.3.2]{flpk3sm}. This cover is calculated in three steps. The first step is to take the cover $f_1\colon \Gamma_0(2) \setminus \mathbb{H} \to  \calM_{M_2}$. This is a double cover ramified over $\lambda \in \{\frac{1}{256},\infty\}$, which is given in coordinates by
\[ \lambda = -\mu^2 + \frac{1}{256},\]
where $\mu$ is a coordinate on $\Gamma_0(2) \setminus \mathbb{H}$ and the orbifold points of orders $(2,\infty,\infty)$ occur at $\mu = (\infty, \frac{1}{16},-\frac{1}{16})$ respectively.

The second step is to take the cover $f_2 \colon \Gamma_0(4) \setminus \mathbb{H} \to \Gamma_0(2) \setminus \mathbb{H}$. This is a double cover ramified over $\mu \in \{\frac{1}{16},\infty\}$, which is given in coordinates by
\[ \mu = -(\mu')^2 + \frac{1}{16},\]
where $\mu'$ is a coordinate on $\Gamma_0(4) \setminus \mathbb{H}$ and the three cusps occur at $\mu' = (0,\frac{1}{\sqrt{8}},-\frac{1}{\sqrt{8}})$.

Finally, the third step is to take the cover $f_3 \colon C_{M_2} \to \Gamma_0(4) \setminus \mathbb{H}$. This is a double cover ramified over $\mu' \in \{\frac{1}{\sqrt{8}},-\frac{1}{\sqrt{8}}\}$, which is given in coordinates by
\[ \mu' = \frac{1}{\sqrt{8}} \frac{(1-\nu^2)}{(1 + \nu^2)},\]
where $\nu$ is a coordinate on the rational curve $C_{M_2}$, which has four cusps occurring at $\nu \in \{0,1,-1,\infty\}$.

The composition of these three maps is the map $f\colon C_{M_2} \to \calM_{M_2}$ given in coordinates by
\[\lambda = \frac{1}{16} \frac{\nu^2(1-\nu^2)^2}{(1 + \nu^2)^4}.\]
This is an $8$-fold cover with deck transformation group $D_8$. It coincides precisely with the change of coordinates computed in \cite[Equation (7.1)]{mmfk3sis3drp}. Given this, the pulled-back family of Kummer surfaces over $C_{M_2}$ is given in affine coordinates by the expression $K(\nu)$ in \cite[Section 7]{mmfk3sis3drp}:
\begin{equation}\label{kummereq} u^2 = s(s-1)(s-\left(\frac{\nu+1}{\nu-1}\right)^2)t(t-1)(t-\nu^2)\end{equation}

We pause here to set up a little more notation. Let $V = f^{-1}(U_{M_2})$ and let $\calY'_{2,V}$ denote the pull-back of $\calY_{2,U}$ to $V$. By \cite[Propositions 4.1 and 5.6]{flpk3sm}, we may undo the Kummer construction for $\calY'_{2,V}$, so there is a family of Abelian surfaces $\calA_{2,V}$ that gives rise to $\calY'_{2,V}$ under fibrewise application of the Kummer construction. 

By Proposition \ref{ellsurfprop}, $\calA_{2,V}$ is isomorphic over $V$ to a fibre product $\calE_1 \times_{C_{M_2}} \calE_2$ of minimal elliptic surfaces $\calE_i \to C_{M_2}$ with section. These elliptic surfaces $\calE_i \to C_{M_2}$ are given in affine coordinates by the expressions $E_i(\nu)$ in \cite[Section 7]{mmfk3sis3drp}. We find
\begin{align*}
\calE_1\colon \quad z^2 &= t(t-1)(t-\nu^2), \\
\calE_2 \colon \quad w^2 &= s(s-1)(s-\left( \frac{\nu+1}{\nu-1}\right)^2).
\end{align*}
From this, we see that $\calE_2$ can be obtained from $\calE_1$ by applying the involution $\nu \mapsto \frac{\nu+1}{\nu-1}$. Thus, it suffices to study the elliptic surface $\calE_1$.

The $j$-invariant for $\calE_1$ is given by the expression (which can be computed directly, or by using Proposition \ref{ellsurfprop} and the fibrewise $\sigma$ and $\pi$ invariants for $\calX_{2,U}$ calculated in Section \ref{specialsect})
\[ j = \frac{4}{27}\frac{(\nu^4-\nu^2+1)^3}{\nu^4(\nu-1)^2(\nu+1)^2}.\]
The fibres of $\calE_1$ with $j$-invariants in the set $\{0,1,\infty\}$ are given in Table \ref{E1table}, where $\nu$ gives the location of the fibre in terms of the parameter $\nu$ on $C_{M_2}$, $j$ gives the corresponding value of the $j$-invariant, Multiplicity gives the order of vanishing of $j$ and Type gives the type of singular fibre. Finally, in the first column, $\omega$ denotes a primitive twelfth root of unity.

\begin{table}
\begin{center}
\begin{tabular}{|c|c|c|c|}
\hline
$\nu \in$ & $j$ & Multiplicity & Type\\ 
\hline
$\{1,-1\}$ & $\infty$ & $2$ & $I_2$ \\
$\{0,\infty\}$ & $\infty$ & $4$ & $I_4$ \\
$\{i,-i, \sqrt{2}, -\sqrt{2}, \frac{1}{\sqrt{2}}, -\frac{1}{\sqrt{2}}\}$ & $1$ & $2$ & $I_0$ \\
$\{\omega, -\omega, \frac{1}{\omega}, -\frac{1}{\omega}\}$ & $0$ & $3$ & $I_0$\\
\hline
\end{tabular}
\end{center}
\caption{Fibres of $\calE_1$ for $j \in \{0,1,\infty\}$.}
\label{E1table}
\end{table}

From this, we see that $\calE_1$ (and thus also $\calE_2$) is a rational elliptic surface with all fibres of type $I_n$ (in fact, it is even an \emph{extremal} rational elliptic surface, see \cite[Section VIII.1]{btes}). The fibre product $\calE_1 \times_{C_{M_2}} \calE_2$ is a singular threefold with isolated singularities in the fibres over $\nu \in \{0,1,-1,\infty\}$. Such threefolds have been studied by Schoen \cite[Lemma 3.1]{ofpress}, who showed that $\calE_1 \times_{C_{M_2}} \calE_2$ admits a small projective resolution. Denote such a resolution by $\calA_2$.

Our next task is to construct a model for $\calY_{2,V}'$. It follows from \cite[Lemma 6.1]{ofpress} that the involution defining the fibrewise Kummer construction on $\calE_1 \times_{C_{M_2}} \calE_2$ lifts to the resolution $\calA_2$. We can thus construct a model $\calY_2' \to C_{M_2}$ for $\calY_{2,V}'$ by performing the fibrewise Kummer construction on $\calA_2$. 

\subsection{Two special Calabi-Yau threefolds}\label{A2sect}

We digress briefly to study the properties of the threefolds $\calA_2$ and $\calY_2'$. The threefold $\calA_2$ is smooth and projective by construction and is isomorphic over $V$ to $\calA_{2,V}$. An easy application of adjunction shows that $\calA_2$ has trivial canonical bundle $\omega_{\calA_{2}} \cong \calO_{\calA_{2}}$ and, by \cite[Remark 2.1]{ofpress}, we see that it is also simply connected. $\calA_2$ is thus a Calabi-Yau threefold.

The invariants of $\calA_2$ can be computed using the methods of Schoen \cite{ofpress}. By the discussion in \cite[Section 2]{ofpress}, we see that the Euler characteristic $e(\calA_2)$ is equal to $64$. Furthermore, \cite[Proposition 7.1]{ofpress} implies that $\calA_2$ is a rigid threefold, so $h^{2,1}(\calA_2) = 0$. Thus we find that the remaining Hodge number $h^{1,1}(\calA_2) = 32$.

The threefold $\calY_2'$ is also smooth and projective, has trivial canonical bundle $\omega_{\calY'_2} \cong \calO_{\calY_2'}$ and, by \cite[Remark 6.5]{ofpress}, is simply connected. $\calY_2'$ is therefore also a Calabi-Yau threefold. Note that $\calY_2'$ is a resolution of the threefold given in affine coordinates by Equation \eqref{kummereq}.

The Euler characteristic of $\calY_2'$ can be computed using \cite[Lemma 6.3]{ofpress}, to obtain $e(\calY_2') = 80$. However, the Hodge numbers of $\calY'_2$ are not quite so simple to compute.

\begin{proposition} \label{Y2'hodge} $\calY_2'$ has Hodge numbers $h^{2,1}(\calY_2') = 0$ and $h^{1,1}(\calY'_2) = 40$.
\end{proposition}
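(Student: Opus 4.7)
The plan is to compute $h^{2,1}(\calY_2') = 0$ directly, from which $h^{1,1}(\calY_2') = 40$ follows immediately: as $\calY_2'$ is Calabi-Yau one has $e(\calY_2') = 2(h^{1,1}(\calY_2') - h^{2,1}(\calY_2'))$, and $e(\calY_2') = 80$ was already established.

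To obtain $h^{2,1}(\calY_2') = 0$, view $\calY_2' \to \calA_2/\iota$ as the crepant resolution of the quotient of $\calA_2$ by the lift $\iota$ of the fibrewise Kummer involution $(p,q) \mapsto (-p,-q)$ on $\calE_1 \times_{C_{M_2}} \calE_2$; this lift exists by \cite[Lemma 6.1]{ofpress}. Let $Z \subset \calA_2$ denote the fixed locus of $\iota$. Provided $Z$ is smooth of pure codimension two, a standard local calculation shows that blowing up $\calA_2$ along $Z$, lifting $\iota$ (which then fixes the exceptional divisor pointwise), and taking the quotient yields the crepant resolution $\calY_2'$, and one deduces
\[
    h^{p,q}(\calY_2') \;=\; h^{p,q}(\calA_2)^{\iota^* = 1} \;+\; h^{p-1,q-1}(Z).
\]
Specializing to $(p,q) = (2,1)$, the first summand vanishes since $h^{2,1}(\calA_2) = 0$ by the rigidity of $\calA_2$ established earlier, so the problem reduces to showing $h^{1,0}(Z) = 0$.

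On a smooth fibre $E_1 \times E_2$, the locus $Z$ meets the fibre in the $16$ points $(x_i, y_j)$ with $x_i \in E_1[2]$ and $y_j \in E_2[2]$. The Weierstrass equations for $\calE_1$ and $\calE_2$ displayed in Section \ref{section:undokummer} exhibit all nontrivial $2$-torsion explicitly (namely $z = 0$ with $t \in \{0,1,\nu^2\}$, and similarly for $\calE_2$), so the 2-torsion of each $\calE_i$ is a disjoint union of four sections over $C_{M_2}$. Pulling back the fibre product of these sections yields $16$ disjoint sections of $\calE_1 \times_{C_{M_2}} \calE_2 \to C_{M_2}$, each isomorphic to $C_{M_2} \cong \Proj^1$, which we expect to lift to $16$ disjoint smooth rational curves in $\calA_2$ comprising all of $Z$. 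Granting this, $h^{1,0}(Z) = 0$ and the proof is complete.

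The main obstacle is the last claim: one must verify that $Z$ is \emph{precisely} this disjoint union of $16$ smooth rational curves, with no further components contributed by the exceptional locus of the small resolution $\calA_2 \to \calE_1 \times_{C_{M_2}} \calE_2$, and that the $2$-torsion sections themselves remain disjoint and smooth over the special values $\nu \in \{0, \pm 1, \infty\}$ where $\calE_1$ and $\calE_2$ acquire $I_n$-fibres. Both points require the explicit description of $\iota$ on $\calA_2$ from \cite[Lemma 6.1]{ofpress}, together with a case-by-case look at the action of $\iota$ on the $I_n$-fibres. A useful consistency check is that the Hodge-number formula above, combined with $e(Z) = 32$, correctly reproduces $e(\calY_2') = (e(\calA_2) + 3e(Z))/2 = (64 + 96)/2 = 80$.
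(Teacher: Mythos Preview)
Your approach is correct and genuinely different from the paper's. The paper realises $\calY_2'$ birationally as a double cover of a blow-up of $\WP(1,1,2,2)$ and computes $h^{2,1}$ via Cynk--van Straten deformation theory for double covers: the deformation space splits as $H^1(\hat{Z},\Theta_{\hat{Z}}(\log \hat{B})) \oplus H^1(\hat{Z},\Theta_{\hat{Z}}\otimes\calO_{\hat{Z}}(-6))$, the first summand vanishes by rigidity of $\calA_2$ (via a result of Kapustka--Kapustka), and the second reduces to showing that all curves blown up in the resolution $\hat{Z}\to Z$ are rational. Your route is more direct: you stay with the Kummer-quotient description $\calY_2' \cong \mathrm{Bl}_Z(\calA_2)/\iota$ and invoke the standard orbifold Hodge formula. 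Both arguments hinge on the same two facts --- rigidity of $\calA_2$, and rationality of a finite collection of curves --- but your packaging avoids the weighted-projective model and the external deformation-theoretic lemmas, at the cost of having to analyse the fixed locus $Z$ by hand.

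Two small points on the verification you flag. First, your claim that the Weierstrass equations exhibit the $2$-torsion as four \emph{disjoint} sections is not quite right as stated: in the Weierstrass model the sections $t=0$ and $t=\nu^2$ collide at $\nu=0$. What is true (and is what you need) is that in the \emph{minimal} model $\calE_i$ the four $2$-torsion sections are pairwise disjoint --- a standard fact for torsion sections on relatively minimal elliptic surfaces. Second, to rule out extra fixed components over $\nu\in\{0,\pm 1,\infty\}$ it suffices to note that on an $I_n$ fibre the hyperelliptic involution swaps the nodes in pairs; hence every node of $\calE_1\times_{C_{M_2}}\calE_2$ is moved by $\iota$, so the exceptional $\Proj^1$'s of the small resolution contribute nothing to $Z$. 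With those two observations, your argument goes through.
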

\begin{proof} We compute $h^{2,1}(\calY_2')$ using the method of \cite[Section 2.1]{fpessakf}. From the affine form \eqref{kummereq}, we see that $\calY_2'$ is birational to a double cover of the weighted projective space $\WP(1,1,2,2)$ ramified over the pair of weighted cones given by
\begin{equation} \label{brancheq} s(s-(\mu+\nu)^2)(s-(\mu-\nu)^2)t(t-\mu^2)(t-\nu^2)=0,\end{equation}
where $(\mu,\nu,s,t)$ are homogeneous coordinates on $\WP(1,1,2,2)$ of weights $(1,1,2,2)$ respectively. We may therefore compute the space of deformations of $\calY_2'$ using the methods of \cite{iddcsav}.

However, in order to apply these methods we first need to ensure that our base space is smooth. Let $Z$ denote the blow-up of $\WP(1,1,2,2)$ along $\mu = \nu = 0$. Then $Z$ is a smooth variety. Let $B \subset Z$ denote the pull-back of the branch locus \eqref{brancheq} and let $(\hat{Z},\hat{B}) \to (Z,B)$ be a resolution of the singularities of $B$.  Then $\calY_2'$ is birational to a double cover of $\hat{Z}$ ramified over $\hat{B}$.

By \cite[Proposition 2.1]{iddcsav}, the space of deformations of $\calY_2'$ is isomorphic to
\[H^1(\hat{Z},\Theta_{\hat{Z}}(\log \hat{B})) \oplus H^1(\hat{Z},\Theta_{\hat{Z}} \otimes \calO_{\hat{Z}}(-6)),\] 
where $\Theta_{\hat{Z}}$ denotes the tangent bundle of $\hat{Z}$ and $\Theta_{\hat{Z}}(\log \hat{B})$ is the kernel of the natural restriction map $\Theta_{\hat{Z}} \to \calN_{\hat{B}|\hat{Z}}$. We will show that each of these direct summands vanishes, so that the space of deformations of $\calY_2'$ is trivial.

Since $Z$ is rigid (see \cite[Remark 2.21]{fpessakf}), applying \cite[Corollary 2]{iddcsav} we find that $H^1(\hat{Z},\Theta_{\hat{Z}}(\log \hat{B}))$ is isomorphic to the space of equisingular deformations of $B$ in $Z$. Furthermore, by \cite[Proposition 2.19]{fpessakf}, any equisingular deformation of $B$ in $Z$ induces a deformation of $\calA_2$. But we saw above that $\calA_2$ is rigid, so we must have $H^1(\hat{Z},\Theta_{\hat{Z}}(\log \hat{B})) = 0$.

To compute $H^1(\hat{Z},\Theta_{\hat{Z}} \otimes \calO_{\hat{Z}}(-6))$ we use \cite[Proposition 5.1]{iddcsav}. Suppose that the resolution $\hat{Z} \to Z$ is a composition of blow-ups along subvarieties $C_i$. Then \cite[Proposition 5.1]{iddcsav} gives an equation
\[h^1(\hat{Z},\Theta_{\hat{Z}} \otimes \calO_{\hat{Z}}(-6)) = h^1(Z,\Theta_{{Z}} \otimes \calO_{{Z}}(-6)) + \sum_{\mathrm{codim} C_i = 2} h^0(C_i,K_{C_i}). \]
Now, as $K_Z = \calO_Z(-6)$, we have
\[H^1(Z,\Theta_{{Z}} \otimes \calO_{{Z}}(-6)) \cong H^1(Z, \Omega_Z^{2}\otimes K_Z^{\vee} \otimes  \calO_Z(-6)) = H^1(Z,\Omega_Z^2),\]
which vanishes by \cite[(2.3)]{phnhvo}.

Thus to show that $H^1(\hat{Z},\Theta_{\hat{Z}} \otimes \calO_{\hat{Z}}(-6)) = 0$ it suffices to show that the curves $C_i$ are all rational. These curves arise from the multiple curves in the divisor $B$, which can be divided into three classes:
\begin{enumerate}
\item multiple curves arising from the preimages of the two points $(0,0,1,0)$ and $(0,0,0,1)$ under the blow-up $Z \to \WP(1,1,2,2)$,
\item the strict transforms in $Z$ of the double curves lying in the weighted cones $ s(s-(\mu+\nu)^2)(s-(\mu-\nu)^2) = 0$ and $t(t-\mu^2)(t-\nu^2) = 0$, and
\item the strict transforms in $Z$ of the double curves arising from the intersection between these two weighted cones.
\end{enumerate} 
The preimages in $Z$ of the two points $(0,0,1,0)$ and $(0,0,0,1)$ are copies of $\Proj^1$ and appear with multiplicity $3$ in $B$. To resolve the singularities of $B$, we blow up once along each copy of $\Proj^1$, then again along the six further $\Proj^1$'s arising as the intersection between the exceptional loci and the strict transform of $B$. As all curves blown up by this procedure are $\Proj^1$'s, they do not contribute to $h^1(\hat{Z},\Theta_{\hat{Z}} \otimes \calO_{\hat{Z}}(-6))$.

The curves of the second class are also all isomorphic to $\Proj^1$ and each gets blown up once to resolve $B$. Therefore they also do not contribute to $h^1(\hat{Z},\Theta_{\hat{Z}} \otimes \calO_{\hat{Z}}(-6))$.

The curves of the third class are the most difficult. There are nine of them and each gets blown up once to resolve $B$. To see that they are all rational, we note that each is a section of the fibration $Z \to \Proj^1$ given by projecting onto $(\mu,\nu)$. So they do not contribute to $h^1(\hat{Z},\Theta_{\hat{Z}} \otimes \calO_{\hat{Z}}(-6))$, which is therefore trivial.

Thus we find that the space of deformations of $\calY_2'$ is trivial. But this implies that $h^{2,1}(\calY_2') = 0$. Finally, the statement that $h^{1,1}(\calY_2') = 40$ follows immediately from the fact that $e(\calY_2') = 80$.
\end{proof} 

\begin{remark} We note that we can identify a lot of data about the Kummer fibration on $\calY_2'$ explicitly from $\hat{Z}$. This fibration is induced by the fibration $Z \to \Proj^1$ given by projecting onto $(\mu,\nu)$. The divisors $G_i$ in the double Kummer pencil on $\calY_2'$ are the lifts to $\calY_2'$ of the strict transforms of the three components of the cone $t(t-\mu^2)(t-\nu^2) = 0$ and the first exceptional divisor arising from the blow up of the curve in $Z$ over $(0,0,1,0)$, and the divisors $H_j$ are given by the same procedure applied to the cone $ s(s-(\mu+\nu)^2)(s-(\mu-\nu)^2) = 0$. Finally, the sixteen divisors $E_{ij}$ are given by the lifts to $\calY_2'$ of the nine exceptional divisors arising from blow-ups of curves of class (3), the six exceptional divisors arising from the second blow-up of curves of class (1), and the divisor over $\mu = \nu = 0$.
\end{remark}

\subsection{The action of {$D_8$}} \label{sect:D8action}

To get from $\calY'_2$ to a model for $\calY_{2,U}$ we need to understand the action of the group $D_8$ on $\calY_2'$. The action of $D_8$ on the base curve $C_{M_2}$ is generated by the automorphisms
\begin{align*}
\alpha\colon \nu & \longmapsto \frac{\nu - 1}{\nu + 1}, \\
\beta \colon \nu & \longmapsto -\nu,
\end{align*}
satisfying $\alpha^4 = \beta^2 = \mathrm{Id}$ and $\beta\circ\alpha\circ\beta = \alpha^{-1}$. Note that the automorphism $\alpha$ interchanges $\calE_1$ and $\calE_2$, whilst $\beta$ preserves them.

We next compute the action of $\alpha$ and $\beta$ on the threefold $\calY_2'$. To do this, we need to understand how these automorphisms act on the fibres of $\calY_2'$. But this action is identical to that induced on the fibres of $\calY_{2,U}$ by monodromy around appropriately chosen loops in $U_{M_2}$.

By the results of \cite[Section 4.3]{flpk3sm}, the action of monodromy around loops in $U_{M_2}$ on the fibres of $\calY_{2,U}$ is completely encoded by its action on the divisors $\{F_3,\ldots,F_8\}$ . This action may be calculated using the fact (see Section \ref{sect:Mpol}) that these divisors lie in the six $I_2$ fibres of the elliptic fibration $\Psi$ on the K3 fibres of $\calY_{2,U}$, the locations of which are given by the roots of the polynomials $(P(x) \pm 1)$, where $P(x)$ is as defined in Equation \eqref{Pequation} (and $(a,b)$ in this equation are the $(a,b)$ values associated to the $M$-polarized fibres of $\calX_{2,U}$, calculated in Section  \ref{specialsect}, normalized so that $d=1$).

Thus, to compute the action of monodromy on the divisors $\{F_3,\ldots,F_8\}$, it suffices to compute its action on the roots of $(P(x) \pm 1)$. This action may be calculated explicitly using the \texttt{monodromy} command in \emph{Maple}'s \texttt{algcurves} package, with the base point $\lambda = -\frac{257}{256}$ (which was chosen for ease of lifting to the covers of $U_{M_2}$). We obtain Table \ref{monodromy}, where $\lambda$ gives the $\lambda$-value at the puncture in $U_{M_2}$ around which monodromy occurs and Monodromy gives the action of anticlockwise monodromy around that point on the divisors  $\{F_3,\ldots,F_8\}$ in the fibre over $\lambda = -\frac{257}{256}$, expressed as a permutation in $S_6$, where we assign the labels $1,\ldots,6$ to $F_3,\ldots,F_8$ respectively.

\begin{table}
\begin{center}
\begin{tabular}{|c|c|}
\hline
$\lambda$ & Monodromy \\ 
\hline
$0$ & $(14)(25)(36)$ \\
$\frac{1}{256}$ & $(12)$ \\
$\infty$ & $(1524)(36)$ \\
\hline
\end{tabular}
\end{center}
\caption{Action of monodromy around punctures in $U_{M_2}$ on $\{F_3,\ldots,F_8\}$.}
\label{monodromy}
\end{table}

Now we lift this information to $\calY'_2$. There is a natural double Kummer pencil on the fibres of $\calY'_{2,V}$, defined by the fibrewise Kummer construction, so we may use the method of Remark \ref{dkrem} to define the elliptic fibration $\Psi$ and the divisors $\{F_3,\ldots,F_8\}$ on the K3 fibres of $\calY'_{2,V}$. We can then compute the action of the automorphisms $\alpha$ and $\beta$ on the divisors $\{F_3,\ldots,F_8\}$ (calculated using the base point $\nu = -\frac{\sqrt{17}}{4}$). We find that $\alpha$ acts on these divisors as the permutation $(1524)(36)$ and $\beta$ acts as the permutation $(14)(25)(36)$.

With this in place, we are now ready to compute the action of $\alpha$ and $\beta$ on the threefold $\calY_2'$. The expressions we compute will use the coordinates $(\nu,s,t,u)$ from the affine description given in Equation \eqref{kummereq}.

We begin with the involution $\beta$. From the calculations above, $\beta$ should be induced by $\nu \mapsto -\nu$ on the base $C_{M_2}$, so fixes the points $\nu \in \{0,\infty\}$. The monodromy calculations above show that $\beta$ should exchange the divisors $F_i \leftrightarrow F_{i+3}$ (for $i \in \{3,4,5\}$) in the fibres of $\calY'_2$. It is easy to show that this action is realised by the involution $\beta$ given by
\[\beta\colon (\nu,s,t,u) \longmapsto \left(-\nu,\left( \frac{\nu-1}{\nu+1} \right)^2s,t,\left( \frac{\nu-1}{\nu+1} \right)^3u\right).\]

Instead of computing the action of $\alpha$ directly, we will instead compute the action of the involution $\alpha\circ\beta$. This involution should be induced by $\nu \mapsto \frac{\nu+1}{\nu-1}$ on the base $C_{M_2}$, so fixes the points $\nu = 1 \pm \sqrt{2}$. The monodromy calculations above show that $\alpha\circ\beta$ should fix the divisors $\{F_3,F_4,F_5,F_8\}$ and exchange $F_6 \leftrightarrow F_{7}$ in the fibres of $\calY'_2$. 

If we compute the involution induced on $\calY'_2$ by $\nu \mapsto \frac{\nu+1}{\nu-1}$, we obtain
\[\iota\colon (\nu,s,t,u) \longmapsto \left( \frac{\nu+1}{\nu-1},t,s,u\right).\]
However, $\iota$ cannot be equal to $\alpha\circ\beta$: it does not preserve the fibration $\Psi$ on the fibres of $\calY'_2$ (as it switches the divisors $G_i$ and $H_i$ from the double Kummer pencil, which has the effect of permuting the sections of $\Psi$) and it does not exchange the divisors $F_6 \leftrightarrow F_{7}$. Both of these problems can be rectified by composing with the unique fibrewise symplectic automorphism $\varphi$ from \cite[Section 4.1]{agksaptec} that exchanges $G_i \leftrightarrow H_i$ and $F_6 \leftrightarrow F_7$ (which may be computed by the method of \cite[Example 7.9]{14thcase}).

Thus, we find that $\alpha\circ\beta = \varphi \circ \iota$. Composing with $\beta$, we find that $\alpha$ is given as the composition $\varphi \circ \iota'$, where
\[\iota'\colon (\nu,s,t,u) \longmapsto \left(\frac{\nu -1}{\nu + 1},\frac{t}{\nu^2},s,\frac{u}{\nu^3}\right)\]
and $\varphi$ is as before.

Define $\calY_2$ to be the threefold obtained by quotienting $\calY_2'$ by the action of $D_8$ described above and resolving any resulting singularities. Then $\calY_2$ is isomorphic to $\calY_{2,U}$ over $U_{M_2}$ by construction, so gives the required model for $\calY_{2,U}$. These threefolds fit together in a diagram:
\[\xymatrixcolsep{2.5pc}\xymatrix{ \calE_1 \times_{C_{M_2}} \calE_2 \ar[d] & \calA_2 \ar[l]_-{\mathrm{resolve}} \ar@{-->}[r]^{\mathrm{Kummer}} \ar[d] & \calY_2' \ar@{-->}[r] \ar[d] & \calY_2 \ar[d] & \calX_2 \ar@{-->}[l]_{\mathrm{Nikulin}} \ar[d] \\
C_{M_2} \ar@{=}[r] & C_{M_2} \ar@{=}[r] & C_{M_2} \ar[r]^{f} & \calM_{M_2} \ar@{=}[r] & \calM_{M_2}
}\]

\subsection{Singular fibres}\label{singfibsect}

Our next task is to study the forms of the singular fibres in the threefolds from the above diagram. These singular fibres come in three flavours, lying over $\lambda \in \{0,\frac{1}{256},\infty\} \subset \calM_{M_2}$; we will study each in turn.

\subsubsection{Fibres over $\lambda = 0$}\label{0subsect}

The first singular fibres we will study lie over the point $\lambda = 0$, which is the cusp in the orbifold base curve. Let $\Delta_0$ denote a small disc around $\lambda = 0$ and let $\Delta'_0$ denote one of the connected components of the preimage of $\Delta$ under $f$. The map $\Delta'_0 \to \Delta_0$ is a double cover ramified over $\lambda = 0 \in \Delta_0$. The preimages of $\lambda = 0$ are $\nu \in \{0,1,-1,\infty\}$; without loss of generality we will assume that $\Delta'_0$ is the connected component containing $\nu = 0$, the other choices give identical results.

The singular fibre of $\calE_1  \times_{C_{M_2}} \calE_2$ over $\nu = 0$ is a product $I_2 \times I_4$ of singular elliptic curves and has $8$ components, each of which is isomorphic to $\Proj^1 \times \Proj^1$. The threefold $\calE_1  \times_{C_{M_2}} \calE_2$ has eight nodes occurring at the points where four components of the central fibre intersect; these may be resolved by a small projective resolution \cite[Lemma 3.1]{ofpress} to give the threefold $\calA_2$, which is smooth. The singular fibre of $\calA_2$ over $\nu=0$ again has $8$ components, which are rational surfaces.

The involution defining the Kummer construction fixes four of the components of the $\nu=0$ singular fibre of $\calA_2$ and acts to exchange the other four as two pairs. The resultant singular fibre of $\calY_2'$ has six components, each of which is a rational surface, arranged in a cube and the threefold $\calY_2'$ is again smooth.

Finally, the involution defining the quotient $\calY_2' \to \calY_2$ is given by $\beta$ from Section \ref{sect:D8action}. This involution acts trivially on two of the components of the $\nu = 0$ fibre in $\calY_2'$ (the ``top'' and ``bottom'' of the cube) and acts as an involution on each of the remaining four (the ``sides'' of the cube). Upon performing the quotient, these four components become exceptional and may be contracted, giving a singular fibre consisting of two rational components in the threefold $\calY_2$ (which is smooth over $\Delta_0$). These rational components meet along four rational curves, which form a cycle in each component.

\subsubsection{Fibres over $\lambda = \frac{1}{256}$}

The next type of singular fibres lie over the point $\lambda = \frac{1}{256}$, which is an orbifold point of order $2$ in the base curve. As before, let $\Delta_{\frac{1}{256}}$ denote a small disc around $\lambda = \frac{1}{256}$ and let $\Delta'_{\frac{1}{256}}$ denote one of the connected components of the preimage of $\Delta_{\frac{1}{256}}$ under $f$. The map $\Delta'_{\frac{1}{256}} \to \Delta_{\frac{1}{256}}$ is a double cover ramified over $\lambda = \frac{1}{256} \in \Delta$. The preimages of $\lambda = \frac{1}{256}$ are $\nu \in \{1 \pm \sqrt{2},-1\pm\sqrt{2}\}$; without loss of generality we will assume that $\Delta'_{\frac{1}{256}}$ is the connected component containing $\nu = 1 + \sqrt{2}$, the other choices give identical results.

The fibre of $\calE_1  \times_{C_{M_2}} \calE_2$ over $\nu = 1+\sqrt{2}$ is a product $E \times E$ of a smooth elliptic curve $E$ with $j(E) = \frac{125}{27}$ with itself, so the threefold $\calE_1  \times_{C_{M_2}} \calE_2$ is smooth and isomorphic to $\calA_2$ over $\Delta'_{\frac{1}{256}}$. The fibre of $\calY_2'$ over $\nu = 1+\sqrt{2}$ is thus just the Kummer surface associated to a product of an elliptic curve with itself and the threefold $\calY_2'$ is smooth.

The involution defining the quotient $\calY_2' \to \calY_2$ is given by $\beta\alpha  = \varphi \circ \iota$ from Section \ref{sect:D8action}. Its action is calculated in the same way as \cite[Example 7.9]{14thcase} and yields the same result: for the action of the fibrewise involution $\varphi$ to be well-defined we must first contract two curves lying in the fibre of $\calY_2'$ over $\nu=1 + \sqrt{2}$, giving two nodes in the threefold total space. After this contraction has been performed the involution $\beta\alpha$ is well-defined and acts trivially on the fibre over $\nu = 1 + \sqrt{2}$.

We find that the threefold $\calY_2$ is smooth over $\Delta_{\frac{1}{256}}$ and its fibre over $\lambda = \frac{1}{256}$ is a singular K3 surface containing two $A_1$ singularities. Under the double cover ramified over the fibre $\lambda = \frac{1}{256}$ these become two nodes in the threefold total space, which may be crepantly resolved to give the threefold $\calY_2'$.

\subsubsection{Fibres over $\lambda = \infty$}\label{infinitysubsect}

The final, and most difficult, type of singular fibres lie over the point $\lambda = \infty$, which is an orbifold point of order $4$ in the base curve. As before, let $\Delta_{\infty}$ denote a small disc around $\lambda = \infty$ and let $\Delta'_{\infty}$ denote one of the connected components of the preimage of $\Delta_{\infty}$ under $f$. The map $\Delta'_{\infty} \to \Delta_{\infty}$ is a cyclic four-fold cover ramified over $\lambda = \infty \in \Delta_{\infty}$. The preimages of $\lambda = \infty$ are $\nu = \pm i$; without loss of generality we will assume that $\Delta'_{\infty}$ is the connected component containing $\nu = i$, the other choice gives the same result.

The fibre of $\calE_1  \times_{C_{M_2}} \calE_2$ over $\nu = i$ is a product $E \times E$ of a smooth elliptic curve $E$ with $j(E) = 1$ with itself, so the threefold $\calE_1  \times_{C_{M_2}} \calE_2$ is smooth and isomorphic to $\calA_2$ over $\Delta'_{\infty}$. The fibre of $\calY_2'$ over $\nu =i$ is thus just the Kummer surface associated to a product of an elliptic curve with itself and the threefold $\calY_2'$ is smooth.

We compute the fibre of $\calY_2$ over $\lambda = \infty$ in two stages. First, we compute the quotient of $\calY_2'$ by the involution $\alpha^2$. This involution acts on the fibre over $\nu = i$ as $u \mapsto -u$, which fixes the four curves $G_i$ and $H_j$. The quotient by $\alpha^2$ therefore has a fibre of multiplicity $2$ and eight disjoint curves of $cA_1$ singularities, given by the images of the $G_i$ and $H_j$. These curves may be crepantly resolved to give $8$ exceptional divisors. The resulting threefold is smooth and has a singular fibre with $9$ components: a rational component of multiplicity $2$, isomorphic to $\Proj^1 \times \Proj^1$ blown up at sixteen points (the sixteen $(-1)$-curves are the images of the $E_{ij}$), meeting eight disjoint exceptional components of multiplicity $1$, each of which is isomorphic to $\mathbb{F}_2$. It is an example of a ``flowerpot degeneration'' \cite{bgd9} with flowers of type $4\alpha$ (see \cite[Table 3.3]{bgd9}).

Next, we compute the quotient of this threefold by $\alpha$, which now acts as an involution. This involution acts on the images of the $G_i$ and $H_j$ as follows: $\alpha$ fixes $H_0$ and $H_1$ pointwise, exchanges $H_2$ and $H_3$, and acts on each $G_i$ as an involution that fixes the intersections between $G_i$ and the images of $E_{i0}$ and $E_{i1}$.

Thus, on the degenerate fibre, $\alpha$ acts as an involution on each of the components over the curves $G_i$, $H_0$ and $H_1$, and exchanges the components over $H_2$ and $H_3$. This action has twelve disjoint fixed curves: the curves $H_0$ and $H_1$, the eight fibres in the $\mathbb{F}_2$-components over the $G_i$ that lie above the intersections $G_i \cap E_{i0}$ and $G_i \cap E_{i1}$ (two curves in each of four components), and the $(-2)$-sections in the $\mathbb{F}_2$-components over $H_0$ and $H_1$. 

The degenerate fibre in the quotient by $\alpha$ therefore has eight components: one of multiplicity $4$, coming from the quotient of the component of multiplicity $2$, and seven of multiplicity $2$, coming from the quotients of the eight components of multiplicity $1$ (recall that the components over $H_2$ and $H_3$ are identified). Furthermore, the $12$ fixed curves give rise to $12$ disjoint curves of $cA_1$ singularities in the threefold total space, which may be crepantly resolved to obtain a further $12$ exceptional components.

After performing this resolution, we find that the threefold $\calY_2$ is smooth over $\Delta_{\infty}$. Its fibre over $\lambda = \infty$ has $20$ components: one of multiplicity $4$, two of multiplicity $3$, seven of multiplicity $2$ and ten of multiplicity $1$.
\medskip

From this calculation and the adjunction formula for multiple covers, it is easy to see that:

\begin{theorem} $\calY_2 \to \calM_{M_2}$ is a smooth threefold fibred by Kummer surfaces with canonical bundle $\omega_{\calY_2} \cong \calO_{\calY_2}(-F)$, where $F$ is the class of a K3 surface fibre. Moreover, the restriction $\calY_{2,U}$ of $\calY_2$ to the open set $U_{M_2}$ is isomorphic to the resolved quotient of $\calX_{2,U}$ by the fibrewise Nikulin involution.
\end{theorem}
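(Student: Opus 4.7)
The plan is to verify the three assertions in turn --- smoothness, the Kummer fibration structure (together with the identification with $\calY_{2,U}$), and the canonical bundle formula --- drawing on the analyses of the preceding subsections.

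\emph{Smoothness, Kummer fibration, and identification.} Over $U_{M_2}$ the cover $f\colon C_{M_2}\to\calM_{M_2}$ is unramified Galois with group $D_8$ acting faithfully on the base, so the induced $D_8$-action on $\calY'_{2,V}$ has trivial stabilizers and the quotient map $\calY'_{2,V}\to\calY_{2,U}$ is étale. Smoothness of $\calY_2$ over $U_{M_2}$ therefore descends from that of $\calY_2'$ (Section \ref{A2sect}); smoothness over the three remaining base points $\lambda\in\{0,\tfrac{1}{256},\infty\}$ is precisely the conclusion of the explicit fiber-by-fiber analyses in Section \ref{singfibsect}. By construction (Section \ref{sect:D8action}), $\calY_2$ restricts over $U_{M_2}$ to $\calY_{2,U}$, the resolved quotient of $\calX_{2,U}$ by the fibrewise Nikulin involution; its general fibre is $\Kum(E_1\times E_2)$, establishing both the Kummer fibration claim and the final assertion of the theorem.

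\emph{Canonical bundle.} Since $\calY_2\to\calM_{M_2}$ is a fibration of a smooth projective threefold over $\Proj^1$ with smooth generic fibre a K3 surface (so of trivial canonical class), one has $\omega_{\calY_2}\cong\pi_{\calM}^{*}\calO_{\Proj^{1}}(n)$ for some integer $n$, and the statement is equivalent to $n=-1$. To pin down $n$, the idea is to apply the adjunction formula to the degree-eight cover $q\colon\calY_2'\to W:=\calY_2'/D_8$, noting that $\omega_{\calY_2'}\cong\calO_{\calY_2'}$ by Section \ref{A2sect}, so
\[ q^{*}K_{W}\;\sim\; -R, \]
where $R$ is the ramification divisor of $q$. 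The support of $R$ is the union of divisors fixed pointwise by non-identity elements of $D_8$, all of which were identified in Section \ref{singfibsect}: two components of the $\nu=0$ fibre fixed by $\beta$, the whole fibre over $\nu=1+\sqrt{2}$ fixed by $\alpha\beta$, and the various components over $\nu=\pm i$ fixed by $\alpha^{2}$ and $\alpha$. Weighting each contribution by $e-1$ (with $e$ the stabilizer order) and descending $K_{W}$ through the birational resolution $r\colon\calY_2\to W$ (correcting by any discrepancy terms arising from a non-crepant locus of $r$) then lets one evaluate $n$; comparing the resulting divisor class against the pull-back of a generic fibre of $\calY_2$ yields $n=-1$.

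\emph{Main obstacle.} The combinatorial heart of the argument is the multiplicity bookkeeping needed to evaluate $R$ (and any discrepancy contribution from $r$) and to extract from it the correct integer $n$. Each special fibre is reducible with components of varying multiplicities, and the ramification is supported on sub-configurations whose components carry stabilizers of mixed orders, so the contributions must be tracked divisor-by-divisor using the fibre diagrams from Section \ref{singfibsect}. A conceptually cleaner alternative that bypasses the case analysis is to regard $f$ as an orbifold-étale cover of the orbifold $\calM_{M_2}$ (with orbifold points of orders $2$, $4$, $\infty$) and to invoke an orbifold Riemann--Hurwitz / canonical bundle formula for K3-fibrations, reducing the identity $n=-1$ to a degree computation on $\calM_{M_2}^{\mathrm{orb}}$.
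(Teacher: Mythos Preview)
Your treatment of smoothness and of the identification over $U_{M_2}$ is correct and matches the paper: both follow from freeness of the $D_8$-action over $V$ together with the explicit local analyses of Section~\ref{singfibsect}, and this is exactly what the paper has in mind.

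The canonical bundle argument, however, has two genuine gaps. First, the assertion that $\omega_{\calY_2}\cong\pi^{*}\calO_{\Proj^{1}}(n)$ for some integer $n$ does not follow merely from the generic fibre being K3. The fibre over $\lambda=\infty$ has components of multiplicities $1,2,3,4$, so a priori $\omega_{\calY_2}$ could differ from a pullback by a nontrivial $\Z$-combination of those components (think of multiple fibres in Kodaira's formula). What rules this out is that every modification in Section~\ref{singfibsect} is crepant --- the paper says so explicitly for the curves of $cA_1$ singularities over $\lambda=\infty$, and the contractions over $\lambda=0$ are of exceptional divisors --- so $K_{\calY_2}$ is locally the pullback of $K_W$. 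You acknowledge possible discrepancy corrections but never verify they vanish, and you never invoke crepancy to justify the pullback form. Second, and more seriously, you set up the ramification/discrepancy bookkeeping that would determine $n$, then under ``Main obstacle'' explicitly concede that you have not carried it out. Asserting ``yields $n=-1$'' without doing the count, and then offering an orbifold alternative that is also not executed, leaves the core computation undone.

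The paper's own proof is a single line (``from this calculation and the adjunction formula for multiple covers''), so your overall strategy is the same. The difference is that the paper regards the preceding fibre analyses --- which already record the crepancy of each step and the local cyclic-cover degrees $2$, $2$, $4$ over $\lambda=0,\tfrac{1}{256},\infty$ --- as having done the work, so that a direct adjunction computation for each local cyclic cover immediately gives the answer. Your write-up re-poses that computation in global ramification-divisor language and then stops short of finishing it.
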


\subsection{Constructing Calabi-Yau threefolds}\label{cy3constsect}

Recall that, in \cite{cytfmqk3s}, the threefolds $\calX_g$ were constructed by pulling-back the family $\calX_2 \to \calM_{M_2}$ by a map $g\colon \Proj^1 \to \calM_{M_2}$ and resolving singularities. The aim of this section is to perform the same construction with the threefold $\calY_2$, then to see how the resulting threefolds are related to the $\calX_g$.

As in \cite{cytfmqk3s}, let $g\colon \Proj^1 \to \calM_{M_2}$ be an $n$-fold cover and let $[x_1,\ldots,x_k]$, $[y_1,\ldots,y_l]$ and $[z_1,\ldots,z_m]$ be partitions of $n$ encoding the ramification profiles of $g$ over $\lambda = 0$, $\lambda = \infty$ and $\lambda = \frac{1}{256}$ respectively. Let $r$ denote the degree of ramification of $g$ away from $\lambda \in \{0,\frac{1}{256},\infty\}$, defined to be
\[r := \sum_{\mathclap{\substack{p \in \Proj^1 \\ g(p) \notin  \{0,\frac{1}{256},\infty\}}}} (e_p -1),\]
where $e_p$ denotes the ramification index of $g$ at the point $p \in \Proj^1$.

Now let $\bar{\calY}_2$ denote the threefold obtained from $\calY_2$ by contracting all of the components in the fibre over $\lambda = \infty$ that have multiplicity less than $4$ (in a neighbourhood of $\lambda = \infty$, the threefold $\bar{\calY}_2$ is isomorphic to the quotient of $\calY_2'$ by the action of $D_8$). Let $\bar{\psi}_g\colon \bar{\calY}_g \to \Proj^1$ denote the normalization of the pull-back $g^*(\bar{\calY}_2)$. Then we have the following analogue of \cite[Proposition 2.3]{cytfmqk3s}.

\begin{proposition} \label{Ytrivialcanonicalprop} The threefold $\bar{\calY}_g$ has trivial canonical sheaf if and only if  $k+l+m-n-r=2$ and either $l = 2$ with $y_1,y_2 \in \{1,2,4\}$, or $l = 1$ with $y_1 = 8$.
\end{proposition}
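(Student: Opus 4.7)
The plan is to compute the canonical sheaf $\omega_{\bar{\calY}_g}$ and impose its triviality, following the strategy of \cite[Proposition 2.3]{cytfmqk3s} with $\bar{\calY}_2$ in place of $\calX_2$. First, I would translate $\omega_{\calY_2} \cong \calO_{\calY_2}(-F)$ (from the theorem at the end of Section \ref{singfibsect}) into a formula for $\omega_{\bar{\calY}_2}$. The contraction $\calY_2 \to \bar{\calY}_2$ removes exactly those components of the $\lambda = \infty$ fibre of multiplicity less than $4$, and the resulting discrepancies can be read off from the explicit fibre description in Section \ref{infinitysubsect}. Combining this with the mild fibre structures over $\lambda = 0$ and $\lambda = 1/256$ described in Section \ref{singfibsect}, one obtains a formula of the shape
\[ K_{\bar{\calY}_2} \;\sim_{\Q}\; \bar{\psi}^*\bigl(\alpha_0\, p_0 \,+\, \alpha_{1/256}\, p_{1/256} \,+\, \alpha_\infty\, p_\infty\bigr), \]
with rational coefficients $\alpha_*$ determined by the orbifold orders ($1$ at the cusp, $2$ at $\lambda=1/256$, $4$ at $\lambda=\infty$) and the local fibre multiplicities.

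Second, I would apply the standard pullback-plus-ramification formula for the normalized base-change $\bar{\psi}_g\colon \bar{\calY}_g \to \Proj^1$:
\[ K_{\bar{\calY}_g} \;=\; g^*K_{\bar{\calY}_2} \,+\, \sum_{p \in \Proj^1} c_p\, F_p, \]
where $F_p = \bar{\psi}_g^{-1}(p)$ and $c_p$ is a local correction depending on the ramification index $e_p$ at $p$ and the local form of $\bar{\calY}_2$ at $g(p)$. Over generic points this is the usual $c_p = e_p - 1$; over each of $\lambda \in \{0, 1/256, \infty\}$ one performs a local analysis on a suitable analytic model, using the fibre descriptions from Section \ref{singfibsect}.

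Summing the contributions, the condition $\omega_{\bar{\calY}_g} \cong \calO_{\bar{\calY}_g}$ reduces to the vanishing of the degree of a $\Q$-divisor on $\Proj^1$. The numerical identity $k+l+m-n-r=2$ will emerge as Riemann--Hurwitz for $g$, while the additional integrality constraints on the corrections $c_p$ for $p$ above $\lambda = \infty$ will force the discrete condition on $l$ and the $y_j$.

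The main obstacle is the local analysis over $\lambda = \infty$: the multiplicity-$4$ component of the $\bar{\calY}_2$-fibre interacts with the order-$4$ orbifold point in a subtle way. Indices $y_j \in \{1, 2, 4\}$ leave a controlled multiple fibre in $\bar{\calY}_g$ whose correction balances the pullback of $\alpha_\infty$, while the exceptional case $y_j = 8$ (forcing $l = 1$) corresponds to the ramification index equalling the order of the deck group $D_8$ from Section \ref{sect:D8action}. I would handle the latter case by lifting through the cover $C_{M_2} \to \calM_{M_2}$ and working directly with $\calY_2'$ of Section \ref{section:undokummer}, where the $D_8$-quotient and the canonical bundle are most transparent; combinatorially, allowed $y_j$ should be precisely those divisors $y$ of $8$ for which $y\cdot \alpha_\infty \in \Z$, modulo the extra condition $l=1$ needed for $y=8$ to be consistent with the degree balance.
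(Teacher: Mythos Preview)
Your proposal is correct and follows exactly the approach the paper takes: the paper's proof is the single sentence ``This is proved in exactly the same way as \cite[Proposition 2.3]{cytfmqk3s},'' and your outline is a faithful unpacking of what that analogous argument entails, with $\bar{\calY}_2$ replacing $\calX_2$. Your elaboration of the local analysis over $\lambda=\infty$ is more detailed than the paper bothers to make explicit, but it is the right computation and leads to the stated constraints on $l$ and the $y_j$.
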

\begin{proof} This is proved in exactly the same way as  \cite[Proposition 2.3]{cytfmqk3s}. \end{proof}

Next we prove an analogue of \cite[Proposition 2.4]{cytfmqk3s}.

\begin{proposition} \label{KYgtrivial} If Proposition \ref{Ytrivialcanonicalprop} holds, then there exists a projective birational morphism $\calY_g \to \bar{\calY}_g$, where ${\calY}_g$ is a normal threefold with trivial canonical sheaf and at worst $\Q$-factorial terminal singularities. Furthermore, any singularities of $\calY_g$ occur in its fibres over $g^{-1}(\frac{1}{256})$, and ${\calY}_g$ is smooth if $g$ is unramified over $\lambda = \frac{1}{256}$ \textup{(}which happens if and only if $m = n$\textup{)}. 
\end{proposition}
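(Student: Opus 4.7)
My plan is to work locally over each of the three special points $\lambda \in \{0,\frac{1}{256},\infty\}$ of $\calM_{M_2}$, using the explicit descriptions of the singular fibres of $\calY_2$ (and hence of $\bar{\calY}_2$) given in Section \ref{singfibsect}. Away from these three points $g$ is \'{e}tale and $\bar{\calY}_2$ is smooth with smooth K3 fibres, so $\bar{\calY}_g$ is automatically smooth there. Thus the entire analysis is local above the three distinguished fibres, and the proof will parallel that of \cite[Proposition 2.4]{cytfmqk3s}.

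First, consider each preimage $p_i \in g^{-1}(0)$ of ramification index $x_i$. The fibre of $\bar{\calY}_2$ over $\lambda = 0$ consists of two reduced rational components meeting along four rational curves (Section \ref{0subsect}). Base change by a cyclic cover of order $x_i$ ramified at $p_i$, followed by normalization, produces at worst isolated $cA_1$ singularities along the intersection curves; these are $\Q$-factorial terminal and admit a projective small resolution that preserves triviality of the canonical sheaf. Similarly, near each preimage $q_j \in g^{-1}(\infty)$ of ramification index $y_j \in \{1,2,4,8\}$, the fibre of $\bar{\calY}_2$ consists only of the multiplicity-$4$ rational component (all lower-multiplicity components having been contracted). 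Base change by a cover of order $y_j$ together with normalization yields a smooth fibre when $y_j = 4$, and otherwise introduces isolated compound Du Val singularities away from any pre-existing loci; the numerical constraints from Proposition \ref{Ytrivialcanonicalprop} are precisely what is needed to ensure that these admit projective crepant resolutions.

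Next, at each preimage $r_k \in g^{-1}(\frac{1}{256})$ with ramification index $z_k$, the fibre of $\bar{\calY}_2$ is a K3 surface with two $A_1$ singularities. When $z_k = 1$ (equivalently $m = n$), the local double-cover description of $\calY_2 \to \calM_{M_2}$ near $\lambda = \frac{1}{256}$ from Section \ref{singfibsect} shows that $\bar{\calY}_2$, and hence $\bar{\calY}_g$, is already smooth there (even though the fibre itself is singular). When $z_k > 1$, pull-back and normalization turn the two surface $A_1$'s into two curves of $cA_1$ singularities lying in the fibre over $r_k$; these are $\Q$-factorial and terminal, so we may leave them unresolved, taking $\calY_g \to \bar{\calY}_g$ to be an isomorphism in a neighbourhood of $r_k$. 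Patching together the local constructions above then produces the desired projective birational morphism $\calY_g \to \bar{\calY}_g$. Triviality of $\omega_{\calY_g}$ follows from the triviality of $\omega_{\bar{\calY}_g}$ (Proposition \ref{Ytrivialcanonicalprop}) together with crepancy of the resolutions.

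The principal obstacle lies over $\lambda = \infty$, where the discarded part of the flowerpot degeneration and the contraction-based definition of $\bar{\calY}_2$ make the local geometry intricate. Verifying that $y_j \in \{1,2,4\}$ or $y_1 = 8$ is exactly the numerical regime in which base change followed by normalization yields singularities that are crepantly and $\Q$-factorially resolvable requires an explicit local calculation, modeled on the corresponding step in \cite[Proposition 2.4]{cytfmqk3s} but with the Kummer-surface structure of our fibres substituted for the $M_2$-polarized K3 structure used there. Once this analysis is complete, the three statements of the proposition — triviality of $\omega_{\calY_g}$, localization of singularities above $g^{-1}(\frac{1}{256})$, and smoothness when $m = n$ — all follow immediately from the local descriptions above.
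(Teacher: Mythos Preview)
Your overall strategy—working locally over the three special points and paralleling \cite[Proposition 2.4]{cytfmqk3s}—matches the paper, but the local analyses contain concrete errors that essentially reverse the geometry at $\lambda = 0$ and $\lambda = \frac{1}{256}$.

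Over $\lambda = 0$: the two components of the fibre of $\bar{\calY}_2$ meet along four rational \emph{curves}, so cyclic base change of order $x_i$ produces four \emph{curves} of $cA_{x_i-1}$ singularities (not isolated $cA_1$ points), together with worse singularities where these four curves meet. The resolution is not small; it introduces many new exceptional divisors, and crepancy is not automatic. The paper handles this by first passing to the double cover, where the fibre becomes the semistable ``cube'' configuration of $\calY_2'$ from Section \ref{0subsect}, then applying Friedman's results \cite{bgd7} on base change of semistable degenerations; odd $x_i$ is treated separately via a further double cover and quotient.

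Over $\lambda = \frac{1}{256}$: you have the picture backwards. The threefold $\bar{\calY}_2$ is \emph{smooth} here; only the fibre surface carries two $A_1$ points. A $z_k$-fold base change therefore yields two \emph{isolated} terminal singularities of type $cA_{z_k-1}$ in the threefold, not curves of $cA_1$'s. These isolated points (after $\Q$-factorialization via \cite[Theorem 6.25]{bgav}) are precisely the singularities that may survive in $\calY_g$.

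Over $\lambda = \infty$: rather than introducing and then resolving new compound Du Val singularities, the paper simply observes that for $y_i \in \{1,2,4\}$ the local model of $\bar{\calY}_g$ is a cyclic quotient of the smooth threefold $\calY_2'$, whose crepant resolution was already computed explicitly in Section \ref{infinitysubsect}; the case $y_i = 8$ is a smooth double cover of the $y_i = 4$ case. Your vaguer description would need to reproduce this analysis anyway.
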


\begin{proof} We follow the same method that was used to prove \cite[Proposition 2.4]{cytfmqk3s} and show that the singularities of $\bar{\calY}_g$ may all be crepantly resolved, with the possible exception of some $\Q$-factorial terminal singularities lying in fibres over $g^{-1}(\frac{1}{256})$.

First note that the threefold $\bar{\calY}_g$ is smooth away from the fibres lying over $g^{-1}\{0,\frac{1}{256},\infty\}$, so it suffices to compute crepant resolutions in a neighbourhood of each of these fibres.

First let $\Delta_{\infty}$ denote a disc in $\calM_{M_2}$ around $\lambda = \infty$ and let $\Delta'_{\infty}$ denote one of its preimages under $g$. Then $g\colon \Delta'_{\infty} \to \Delta_{\infty}$ is a $y_i$-fold cover ramified totally over $\lambda = \infty$, for some $y_i \in \{1,2,4,8\}$.

However, in the three cases $y_i \in \{1,2,4\}$, over $\Delta_{\infty}$ we have that $\bar{\calY}_g$ is isomorphic to a quotient of $\calY_2'$.  Crepant resolutions of such quotients were computed in Subsection \ref{infinitysubsect}: these resolutions have $20$, $9$ and $1$ components in the cases $y_i = 1$, $2$ and $4$ respectively. The case $y_i = 8$ is a double cover of the case $y_i = 4$, in this case $\bar{\calY}_g$ is smooth with one component.

Next let $\Delta_0$ denote a disc in $\calM_{M_2}$ around $\lambda = 0$ and let $\Delta'_0$ denote one of its preimages under $g$. Then $g\colon \Delta'_0 \to \Delta_0$ is an $x_i$-fold cover ramified totally over $\lambda = 0$, for some $x_i$. The fibre of $\bar{\calY}_2$ over $\lambda = 0$ was computed in Subsection \ref{0subsect}: it consists of two rational surfaces meeting along four rational curves $D_1,\ldots,D_4$, which are arranged in a cycle in each component.

Upon proceeding to the $x_i$-fold cover, we find that the threefold $\bar{\calY}_g$ contains four curves of $cA_{x_i-1}$ singularities in its fibre over $g^{-1}(0)$, given by the pull-backs of the curves $D_i$. This has a crepant resolution which contains:
\begin{itemize}
\item $2$ components that are strict transforms of the original $2$,
\item $4(x_i-1)$ components arising from the blow-ups of the four curves of $cA_{x_i-1}$ singularities lying over the $D_i$, and
\item $(x_i - 2)^2$ (if $x_i$ is even) or $(x_i -2)^2 - 1$ (if $x_i$ is odd) components arising from the blow-ups of the intersections between these four curves.
\end{itemize}

To see this, assume first that $x_i$ is even. Then we may factorize the map  $g\colon \Delta'_0 \to \Delta_0$ into an $\frac{x_i}{2}$-fold cover followed by a double cover. We compute a crepant resolution of $\bar{\calY}_g$ over $ \Delta'_0$ as follows. First pull-back $\bar{\calY}_2$ to a double cover of $\Delta_0$ ramified over $\lambda = 0$. The resulting threefold has a crepant resolution, which is locally isomorphic to $\calY_2'$ (in a neighbourhood of $\nu = 0$). Then pull-back again to an $\frac{x_i}{2}$-fold cover ramified over the preimage of $\lambda = 0$. As the fibre of $\calY_2'$ over $\nu = 0$ is semistable, consisting of six rational components arranged in a cube (by Subsection \ref{0subsect}), we may compute a crepant resolution of this $\frac{x_i}{2}$-fold cover using results of Friedman \cite[Section 1]{bgd7}. This gives a crepant resolution of $\bar{\calY}_g$ with the required properties.

Next, assume that $x_i$ is odd. Consider the $2x_i$-fold cover $g'\colon \Delta''_0 \to \Delta_0$ that is ramified totally over $\lambda = 0$. Then a crepant resolution $\calY_{g'}$ of the pull-back of $\bar{\calY}_2$ by $g'$ can be computed as above. Furthermore, there is an involution on this resolution, the quotient by which gives a threefold birational to $\bar{\calY}_g$.

This involution preserves every component of the fibre of $\calY_{g'}$ over $(g')^{-1}(0)$. Its fixed locus consists of the strict transforms of the two components of $\bar{\calY}_2$, along with $(x_i - 1)$ of the exceptional components arising from the blow-up of each curve of $cA_{x_i-1}$ singularities and $(\frac{x_i}{2}-1)^2-\frac{1}{4}$ of the exceptional components arising from the blow-up of each of their intersections. The components appearing in this fixed locus are uniquely determined by the properties that no two of them meet in a double curve and that every non-fixed component meets precisely two fixed ones.

Under the quotient, the non-fixed components become exceptional and may be contracted, resulting in a smooth threefold that resolves $\bar{\calY}_g$. This resolution is crepant by the adjunction formula for double covers.

Finally, let $\Delta_{\frac{1}{256}}$ be a disc in $\calM_{M_2}$ around $\lambda = \frac{1}{256}$ and let $\Delta'_{\frac{1}{256}}$ be one of the connected components of its preimage under $g$. Then $g\colon \Delta'_{\frac{1}{256}} \to \Delta_{\frac{1}{256}}$ is a $z_i$-fold cover ramified totally over $\lambda = \frac{1}{256}$, for some $z_i$. 

The threefold $\bar{\calY}_2$ is smooth over $\Delta_{\frac{1}{256}}$, but its fibre over $\lambda = \frac{1}{256}$ has two isolated $A_1$ singularities. Upon proceeding to the $z_i$-fold cover $\Delta'_{\frac{1}{256}} \to \Delta_{\frac{1}{256}}$, these become a pair of isolated terminal singularities of type $cA_{z_i-1}$ in $\bar{\calY}_g$. 

Thus $\calY_g$ is smooth away from its fibres over $g^{-1}(\frac{1}{256})$, where it can have isolated terminal singularities. By \cite[Theorem 6.25]{bgav}, we may further assume that $\calY_g$ is $\Q$-factorial. To complete the proof, we note that if $g$ is a local isomorphism over $\Delta_{\frac{1}{256}}$, then $\calY_g$ is also smooth over $g^{-1}(\frac{1}{256})$ and thus smooth everywhere.
\end{proof}

Let $\psi_g\colon \calY_g \to \Proj^1$ denote the fibration induced on $\calY_g$ by the map $\bar{\psi}_g\colon \bar{\calY}_g \to \Proj^1$. Then $\calY_g$ is a threefold fibred by Kummer surfaces. It follows that:

\begin{proposition} \label{YgCY3}  Let $\calY_g$ be a threefold as in Proposition \ref{KYgtrivial}. If $\calY_g$ is smooth, then $\calY_g$ is a Calabi-Yau threefold.
\end{proposition}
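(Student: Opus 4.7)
The strategy is to verify the three requirements of the paper's definition of a Calabi-Yau threefold: that $\calY_g$ is a smooth projective threefold, that $\omega_{\calY_g} \cong \calO_{\calY_g}$, and that $H^1(\calY_g, \calO_{\calY_g}) = 0$. Smoothness holds by hypothesis. Projectivity is inherited from the construction, since $\bar{\calY}_g$ is the normalization of the pull-back $g^{*}(\bar{\calY}_2)$ of the projective threefold $\bar{\calY}_2$, and $\calY_g \to \bar{\calY}_g$ is the projective birational morphism produced by Proposition \ref{KYgtrivial}. The triviality of the canonical sheaf is also furnished directly by Proposition \ref{KYgtrivial}.

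Only the cohomological condition $H^1(\calY_g, \calO_{\calY_g}) = 0$ requires genuine work. The plan is to use Hodge symmetry: since $\calY_g$ is smooth projective, $h^{0,1}(\calY_g) = h^{1,0}(\calY_g)$, so it suffices to show $H^0(\calY_g, \Omega^1_{\calY_g}) = 0$. Given $\omega \in H^0(\calY_g, \Omega^1_{\calY_g})$, I would exploit the Kummer K3 fibration $\psi_g \colon \calY_g \to \Proj^1$. The restriction of $\omega$ to any smooth fibre $F = \psi_g^{-1}(p)$ lies in $H^0(F, \Omega^1_F)$, which is zero since $F$ is a K3 surface. Hence over the smooth locus $U \subset \Proj^1$ of $\psi_g$, the image of $\omega$ in $\Omega^1_{\calY_g / \Proj^1}$ vanishes, so $\omega$ lies in the image of the injection $\psi_g^{*}\Omega^1_{\Proj^1} \hookrightarrow \Omega^1_{\calY_g}$; write $\omega|_{\psi_g^{-1}(U)} = \psi_g^{*}\eta$ for some $\eta \in H^0(U, \Omega^1_{\Proj^1})$. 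A pole of $\eta$ at a point $p_0 \in \Proj^1 \setminus U$ would force a pole of $\omega$ along the entire divisor $\psi_g^{-1}(p_0)$, contradicting regularity of $\omega$ on $\calY_g$; hence $\eta$ extends to a global section of $\Omega^1_{\Proj^1}$, which must be zero. Therefore $\omega = 0$.

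The most delicate step is the final extension argument across the singular fibres of $\psi_g$; if one wishes to avoid the local analysis entirely, an equivalent and clean alternative is the Albanese route. Any morphism from a K3 surface to an abelian variety is constant, so $\mathrm{alb}_{\calY_g}$ contracts every smooth fibre of $\psi_g$; by a standard rigidity/continuity argument it then factors as $\calY_g \xrightarrow{\psi_g} \Proj^1 \to \mathrm{Alb}(\calY_g)$. The second morphism must also be constant, since $\Proj^1$ admits no non-constant maps to an abelian variety. Hence $\mathrm{Alb}(\calY_g)$ is trivial and $h^{0,1}(\calY_g) = 0$, completing the verification that $\calY_g$ is Calabi-Yau.
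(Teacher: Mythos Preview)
Your argument is correct. The paper's own proof is terse: it invokes Proposition~\ref{KYgtrivial} for the canonical bundle and then cites \cite[Proposition~2.6]{cytfmqk3s} verbatim for the vanishing of $H^1(\calY_g,\calO_{\calY_g})$, so what you have written is essentially an unpacking of that citation. The standard argument for a K3-fibred threefold over $\Proj^1$ is exactly the one you give (restrict a holomorphic $1$-form to fibres, use $h^{1,0}(\mathrm{K3})=0$ to see it is pulled back from the base, then use $H^0(\Proj^1,\Omega^1_{\Proj^1})=0$), and the Albanese variant you offer is an equally clean alternative; either route is what one expects the cited proposition to contain.
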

\begin{proof} By Proposition \ref{KYgtrivial}, we see that $\calY_g$ has trivial canonical bundle. The condition on the vanishing of the first cohomology $H^1(\calY_g,\calO_{\calY_g})$ is proved in exactly the same way as \cite[Proposition 2.6]{cytfmqk3s}.
\end{proof}

This proposition enables the construction of many Kummer surface fibred Calabi-Yau threefolds $\psi_g\colon \calY_g \to \Proj^1$. These Calabi-Yau threefolds are related to the Shioda-Inose fibred threefolds $\pi_g\colon \calX_g \to \Proj^1$ constructed in \cite{cytfmqk3s} as follows. Let $U := g^{-1}(U_{M_2})$. Then, by construction, the restriction $\psi_g\colon \calY_g|_{U} \to U$ of $\calY_g$ to $U$ is isomorphic to the threefold obtained from the restriction $\pi_g\colon \calX_g|_{U} \to U$ by quotienting by the fibrewise Nikulin involution and resolving singularities (note that the maps $g\colon \Proj^1 \to \calM_{M_2}$ defining $\calX_g$ and $\calY_g$ here are the same). So we may think of the $\calY_g$ as arising from the $\calX_g$ through the process described in Section \ref{sect:threefoldkummer}.

\begin{remark} In fact, there is a kind of duality between the Shioda-Inose fibred threefolds $\pi_g\colon \calX_g \to \Proj^1$ and the Kummer fibred threefolds $\Psi_g\colon \calY_g \to \Proj^1$. As noted above, for $U := g^{-1}(U_{M_2})$, the restriction $\calY_g|_U$ is isomorphic to the resolved quotient of $\calX_g|_U$ by the fibrewise Nikulin involution. However, $\calX_g|_U$ is \emph{also} isomorphic to the resolved quotient of $\calY_g|_U$ by the fibrewise Nikulin involution given in Remark \ref{Nikulinrem2}. So we can move back and forth between $\calX_g|_U$ and $\calY_g|_U$ by quotienting by Nikulin involutions and resolving singularities.
\end{remark}

\subsection{Properties of the constructed threefolds}\label{propertiessect}

The properties of the threefolds $\calY_g$ are closely linked to those of the related threefolds $\calX_g$. Of particular importance to these calculations is the curve $C_g \subset \calX_g$, defined as the closure of the fixed locus of the fibrewise Nikulin involution on $\calX_g|_U$. As the Nikulin involution has $8$ fixed points in a general fibre of $\calX_g$, the curve $C_g$ is an $8$-fold cover of $\Proj^1$.

The curve $C_g$ is easily calculated as the pull-back of the curve $C_2 \subset \calX_2$ (defined in the same way as $C_g \subset \calX_g$) by the map $g$. The properties of $C_2$ are as follows:

\begin{lemma} \label{C2props} The curve $C_2 \subset \calX_2$ has three irreducible components, all of which have genus $0$. Two of these components are double covers of $\calM_{M_2}$ ramified over $\lambda \in \{0, \infty\}$. The third component is a $4$-fold cover of $\calM_{M_2}$ that has ramification profile $[2,1,1]$ over $\lambda = \frac{1}{256}$, ramification profile $[2,2]$ over $\lambda = 0$, and ramification profile $[4]$ over $\lambda = \infty$.
\end{lemma}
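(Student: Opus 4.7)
The plan is to exhibit $C_2$ as the union of orbits of the monodromy action $\rho \colon \pi_1(U_{M_2},p) \to S_8$ on the $8$ fixed points of the fibrewise Nikulin involution in a general fibre of $\calX_2|_{U_{M_2}}$. By definition $C_2$ is the closure of this fibrewise fixed locus, so its irreducible components correspond exactly to these orbits, each component mapping to $\calM_{M_2}$ with degree equal to its orbit size. Under the resolved quotient map the $8$ fixed points of $\beta$ on $X_p$ correspond bijectively to the exceptional $(-2)$-curves $F_1,\ldots,F_8$ in the Kummer fibre $Y_p$ of $\calY_2$: the six curves $F_3,\ldots,F_8$ lie in the $I_2$ fibres of $\Psi$ (three over the roots of each of $P(x) \pm 1$), while the remaining $F_1$ and $F_2$ lie in the $I_6^*$ fibre, coming from fixed points in the $I_{12}^*$ fibre of the alternate fibration $\Theta_2$ on $X_p$.

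For $F_3,\ldots,F_8$, I would read off the orbit structure directly from Table \ref{monodromy}. A short orbit calculation using the permutations $(14)(25)(36)$, $(12)$ and $(1524)(36)$ shows that, under the labelling convention of that table, they split into $\{F_3,F_4,F_6,F_7\}$ and $\{F_5,F_8\}$. Restricting each of the three monodromies to the size-$4$ orbit produces cycle types $[2,2]$, $[2,1,1]$ and $[4]$ over $\lambda = 0$, $1/256$ and $\infty$ respectively; restricting to the size-$2$ orbit gives a transposition at $\lambda \in \{0,\infty\}$ and the identity at $\lambda = 1/256$. These match exactly the ramification profiles of the degree-$4$ component and of one of the degree-$2$ components in the lemma.

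The main obstacle is the pair $\{F_1, F_2\}$, which is not addressed by Table \ref{monodromy}. Since the $I_{12}^*$ fibre is the unique singular fibre of its type for $\Theta_2$, monodromy preserves it, and therefore preserves the pair $\{F_1, F_2\}$ as a set. To pin down when the two are swapped I would extend the Maple \texttt{algcurves} monodromy computation from Section \ref{sect:D8action} to track a pair of objects that distinguish $F_1$ from $F_2$: for instance the two sections $S, S'$ of $\Psi$ identified in Remark \ref{Nikulinrem1}, or the two ``end'' components of the $I_{12}^*$ fibre on $X_p$ to which $F_1$ and $F_2$ attach after passage to the resolved quotient. The expected outcome is that the monodromy around $\lambda = 1/256$ (which only permutes two roots of one of $P(x) \pm 1$) fixes both $F_1$ and $F_2$, whereas the monodromies around $\lambda = 0$ and $\lambda = \infty$ (which mix the two triples $\{F_3,F_4,F_5\}$ and $\{F_6,F_7,F_8\}$) swap them, giving the remaining degree-$2$ component ramified over $\lambda \in \{0,\infty\}$.

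With the orbit structure in place, the three components of $C_2$ have degrees $4+2+2 = 8$, as required. The genus of each then follows from Riemann--Hurwitz: the degree-$4$ component accumulates total ramification $2 + 1 + 3 = 6$, so $2g-2 = -8 + 6 = -2$ and $g = 0$; each degree-$2$ component has exactly two simple branch points, so $2g-2 = -4 + 2 = -2$ and $g = 0$. The only delicate point is the explicit monodromy analysis on $\{F_1, F_2\}$; everything else is either bookkeeping with Table \ref{monodromy} or a direct Riemann--Hurwitz computation.
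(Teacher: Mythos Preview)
Your proposal is correct and follows essentially the same approach as the paper: identify the eight fixed points with the exceptional curves $F_1,\ldots,F_8$, read off the monodromy on $\{F_3,\ldots,F_8\}$ from Table~\ref{monodromy}, deal with $\{F_1,F_2\}$ separately, and finish with Riemann--Hurwitz. The only substantive difference is in the treatment of $\{F_1,F_2\}$: rather than extending the \texttt{algcurves} computation to track auxiliary geometric data, the paper simply invokes \cite[Proposition~4.8]{flpk3sm}, which gives the monodromy on $\{F_1,F_2\}$ directly in terms of the already-known action on $\{F_3,\ldots,F_8\}$. This is cleaner and avoids the need to justify which geometric markers (sections, end components, etc.) actually distinguish $F_1$ from $F_2$; note in particular that your suggestion of tracking $S,S'$ is not obviously adequate, since $S$ is the image of \emph{both} sections of $\Theta_2$ and the link between $S'$ and the pair $\{F_1,F_2\}$ would itself require argument.
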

\begin{proof} Let $p \in U_{M_2}$ be a general point and let $X_p$ (resp. $Y_p$) be the fibre of $\calX_2$ (resp. $\calY_2$) over $p$. Define the divisors $\{F_1,\ldots,F_8\}$ in $Y_p$ as in Section \ref{sect:Mpol}. The $F_i$ arise as the exceptional curves in the resolution of $X_p/\beta$, where $\beta$ is the Nikulin involution on $X_p$. Thus, the action of monodromy in $\pi_1(U_{M_2},p)$ on the $8$ fixed points of $\beta$ in $X_p$, which determines the curve $C_2$, is the same as the action of monodromy on the divisors $F_i$ in $Y_p$.

The action of monodromy on the divisors $\{F_3,\ldots,F_8\}$ was computed explicitly in Table \ref{monodromy}. Using this, the action of monodromy on $\{F_1,F_2\}$ may be computed from \cite[Proposition 4.8]{flpk3sm}. From this it is easy to compute the description of the ramification profiles of the components of $C_2$, their genera may be computed by Hurwitz's theorem.
\end{proof}

It turns out that many of the properties of the Calabi-Yau threefolds $\calY_g$ can be calculated from knowledge of the curve $C_g$ and the ramification behaviour of the map $g$. At this point we restrict ourselves to the case $l = 2$, to avoid pathologies occurring when $l = 1$ (see \cite[Remark 3.1]{cytfmqk3s}).

\begin{proposition} \label{Ygh11} Let $\calY_g$ be a Calabi-Yau threefold as in Proposition \ref{YgCY3} and suppose that $g^{-1}(\infty)$ consists of two points  \textup{(}so that $l=2$\textup{)}. Then
\[h^{1,1}(\calY_g) = 12 + \sum_{x_i\ \mathrm{odd}} x_i^2 + \sum_{x_i\ \mathrm{even}} (x_i^2 + 1) + s + c_1 + c_2,\]
where $[x_1,\ldots,x_k]$ is the partition of $n$ encoding the ramification profile of $g$ over $\lambda = 0$, $s$ is the number of irreducible components of $C_g$, and $c_1$, $c_2$ are given in terms of the partition $[y_1,y_2]$ of $n$ encoding the ramification profile of $g$ over $\lambda = \infty$ by $c_j = 19$ \textup{(}resp. $8$, $0$\textup{)} if and only if $y_j = 1$ \textup{(}resp. $2$, $4$\textup{)}.
\end{proposition}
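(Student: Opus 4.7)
The strategy is to decompose $h^{1,1}(\calY_g)$ by exploiting the K3 fibration $\psi_g\colon \calY_g \to \Proj^1$. Since the higher direct image $R^1(\psi_g)_*\Q$ vanishes (K3 surfaces have no $H^1$) and $\calY_g$ is Calabi-Yau with $h^{2,0}=0$, the Leray spectral sequence collapses to give
\[
h^{1,1}(\calY_g) \;=\; 1 \;+\; r^{\mathrm{inv}} \;+\; \sum_{p \in \Proj^1}\bigl(N_p - 1\bigr),
\]
where $r^{\mathrm{inv}}$ is the rank of the image of the restriction map $\Pic(\calY_g) \to \NS(Y_p)$ at a general fibre $Y_p$ (equivalently, the rank of the monodromy-invariant sublattice of $\NS(Y_p)$), $N_p$ is the number of irreducible components of $\psi_g^{-1}(p)$, the leading $1$ records the pull-back of the hyperplane class from $\Proj^1$, and the sum is the kernel of the restriction map, generated by the extra components of reducible fibres modulo the relation expressing each fibre as the generic fibre class.

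The singular-fibre contributions can be read directly from the analysis done earlier in the paper. Over each point of $g^{-1}(1/256)$ the fibre is irreducible, so $N_p - 1 = 0$. Over each point of $g^{-1}(0)$ with ramification index $x_i$, the explicit crepant resolution built in the proof of Proposition~\ref{KYgtrivial} has $x_i^2 + 2$ components when $x_i$ is even and $x_i^2+1$ components when $x_i$ is odd, contributing $x_i^2+1$ and $x_i^2$ respectively. Over each point of $g^{-1}(\infty)$ with ramification index $y_j \in \{1,2,4\}$, the resolutions computed in Subsection~\ref{infinitysubsect} have $20$, $9$, and $1$ components respectively, contributing $c_j = 19$, $8$, and $0$.

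The main remaining step is to show $r^{\mathrm{inv}} = 11 + s$. A general fibre $Y_p$ is a Kummer K3 with $\rho(Y_p) = 18$, and by Shioda--Tate its Néron--Severi lattice is rationally spanned by the fibre class of $\Psi$, the section $S$, the ten non-identity components of the unique $I_6^*$ fibre, and the six non-identity components of the six $I_2$ fibres. In this basis the six non-identity $I_2$ components are exactly $\{F_3,\dots,F_8\}$, while the two exceptional curves $\{F_1, F_2\}$ from the Nikulin resolution sit among the non-identity components of $I_6^*$. Since the fibration $\Psi$ and its sections $S$ and $S'$ (see Remark~\ref{Nikulinrem1}) are canonical, monodromy preserves them together with the non-$F_i$ components of $I_6^*$ (whose internal symmetry is rigidified by the placement of $S$ and $S'$); these "universal" classes combine to give rank $11$. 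The remaining invariant rank is the number of monodromy orbits on $\{F_1, \dots, F_8\}$, which by the definition of $C_g$ as the closure of the fixed locus of the fibrewise Nikulin involution is exactly the number $s$ of irreducible components of $C_g$.

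Assembling everything,
\[
h^{1,1}(\calY_g) \;=\; 1 + (11 + s) + \sum_{x_i\ \mathrm{odd}} x_i^2 + \sum_{x_i\ \mathrm{even}} (x_i^2+1) + c_1 + c_2,
\]
which is the stated formula. The principal obstacle is the verification that $r^{\mathrm{inv}} = 11 + s$: one must check that every non-$F_i$ component of the $I_6^*$ fibre is monodromy-invariant (so that the "rigid" invariant contribution really is $11$ and no larger), and confirm that, conversely, no non-trivial invariant combination of the $F_i$ outside the orbit sum appears — equivalently, that the orbit count on $\{F_1,\dots,F_8\}$ faithfully records the $s$ components of $C_g$ via the identification in Lemma~\ref{C2props}.
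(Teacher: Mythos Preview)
Your approach is essentially the paper's: the same horizontal/vertical (Leray) decomposition, the same singular-fibre component counts drawn from Proposition~\ref{KYgtrivial} and Subsection~\ref{infinitysubsect}, and the same identification of the horizontal rank with $11+s$ via the monodromy orbits on $\{F_1,\ldots,F_8\}$ and their bijection with components of $C_g$.

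There is one slip worth fixing. You write $\rho(Y_p)=18$, but here the fibres of $\calX_g$ are $M_2$-polarized, so $\rho(X_p)=19$ and hence $\rho(Y_p)=19$ (via $T_Y\cong T_X(2)$); your Shioda--Tate basis is missing the contribution of the second section $S'$. The paper sidesteps this by not attempting an explicit Shioda--Tate description at all: it simply invokes \cite[Corollary~3.2]{flpk3sm} to assert that monodromy acts trivially on the $11$-dimensional orthogonal complement of $\langle F_1,\ldots,F_8\rangle$ inside $\NS(Y_p)$. Your Shioda--Tate argument can be repaired (add $S'$ to the invariant list, and check that the two $I_6^*$ components you call $F_1,F_2$ really are among the Nikulin exceptional curves, with the rest rigidified by the positions of $S$ and $S'$), but the cleaner route is the paper's, which also avoids the verification you flag as the ``principal obstacle.''
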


\begin{proof} We follow the same method used to prove \cite[Proposition 3.2]{cytfmqk3s}, by noting that $h^{1,1}(\calY_g)$ is equal to the sum of the ranks of the groups of horizontal divisors $\Pic^h(\calY_g)$ and vertical divisors $\Pic^v(\calY_g)$.

We begin with the subspace of horizontal divisors. As before, we have an embedding $\Pic^h(\calY_g) \hookrightarrow \Pic(Y)$, where $Y$ denotes a general fibre of $\calY_g$, given by restriction. Furthermore, by \cite[Corollary 3.2]{flpk3sm}, monodromy around singular fibres can only act non-trivially on the $8$-dimensional sublattice of $\Pic(Y)$ generated by the eight curves $\{F_1,\ldots,F_8\}$ (defined as in Section \ref{sect:Mpol}). Thus, every divisor in the $11$-dimensional orthogonal complement to this set is preserved under monodromy, so sweeps out a unique divisor in $\Pic^h(\calY_g)$. This contributes $11$ to the rank of $\Pic^h(\calY_g)$.

To finish computing the rank of  $\Pic^h(\calY_g)$, we thus have to compute how many distinct divisors in $\calY_g$ are swept out by the $F_i$'s. However, as the divisors $F_i$ occur as the blow-ups of the singularities arising from the fixed points of the fibrewise Nikulin involution on $\calX_g$, each distinct divisor swept out by the $F_i$'s corresponds to an irreducible component of $C_g$. There are $s$ such components, so the rank of $\Pic^h(\calY_g)$ is equal to $11 + s$.

Next we consider the vertical divisors. As before, the class of a generic fibre contributes one divisor class to $\Pic^v(\calY_g)$; the remaining divisor classes arise from singular fibres. However, we computed the singular fibres in $\calY_g$ explicitly in the proof of Proposition \ref{KYgtrivial}. There we found that the fibre over a point $p$ with $g(p)=0$ and ramification order $x$ at $p$ has $2 +4(x-1) + (x - 2)^2 = x^2 + 2$ components (if $x$ is even) or $2 +4(x-1) + (x - 2)^2 -1 = x^2 + 1$ components (if $x$ is odd), so each fibre of this kind contributes $x^2 + 1$ (resp. $x^2$) new classes to $\Pic^v(\calY_g)$ when $x$ is even (resp. odd).

Furthermore, the fibre of $\calY_g$ over a point $p$ with $g(p)=\infty$ and ramification order $y$ at $p$ has $20$ (resp. $9$, $1$) components when $y = 1$ (resp. $2$, $4$). Thus, such fibres contribute $19$ (resp.  $8$, $0$) new classes when $y=1$ (resp. $2$, $4$). Summing over all singular fibres of $\calY_g$, we find that
\[\rank(\Pic^v(\calY_g)) = 1 + \sum_{x_i\ \mathrm{odd}} x_i^2 + \sum_{x_i\ \mathrm{even}} (x_i^2 + 1) + c_1 + c_2,\]
where $x_i$ and $c_j$ are as in the statement of the proposition. Adding in the $11 + s$ horizontal divisor classes, we obtain the result.
\end{proof}

\begin{proposition} \label{Ygh21} Let $\calY_g$ be a Calabi-Yau threefold as in Proposition \ref{YgCY3} and suppose that $g^{-1}(\infty)$ consists of two points \textup{(}so that $l=2$\textup{)}. Then 
\[h^{2,1}(\calY_g) =  k + \left(\dfrac{m_\mathrm{odd} - n}{2}\right) + p_g(C_g),\] 
where $k$ denotes the number of ramification points of $g$ over $\lambda = 0$, $m_{\mathrm{odd}}$ denotes the number of ramification points of odd order of $g$ over $\lambda = \frac{1}{256}$,  $n$ is the degree of $g$, and $p_g(C_g)$ denotes the geometric genus of the curve $C_g$ \textup{(}if $C_g$ is singular, this is equal to the sum of the genera of the components in the normalization of $C_g$\textup{)}.

Moreover, if $g$ is unramified over $\lambda = \frac{1}{256}$, then 
\[h^{2,1}(\mathcal{Y}_g) = k + p_g(C_g) = r + p_g(C_g),\] 
where $r$ is the degree of ramification of the map $g$ away from $\lambda \in \{0,\frac{1}{256},\infty\}$.
\end{proposition}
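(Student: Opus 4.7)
The plan is to reduce the problem to computing the topological Euler characteristic $e(\calY_g)$ by using the standard Calabi--Yau threefold identity
\[h^{2,1}(\calY_g) = h^{1,1}(\calY_g) - \tfrac{1}{2}e(\calY_g),\]
together with the formula for $h^{1,1}(\calY_g)$ already supplied by Proposition \ref{Ygh11}.

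To compute $e(\calY_g)$ I would use the fibration $\psi_g\colon \calY_g \to \Proj^1$. Since a general fibre is a smooth K3 surface, additivity of $e$ gives $e(\calY_g) = 48 + \sum_{p}(e(F_p) - 24)$, summed over those points $p$ whose fibre is not a smooth K3. The values $e(F_p)$ are extracted from the explicit descriptions of singular fibres recorded in the proof of Proposition \ref{KYgtrivial}. Four cases arise. Over $g^{-1}(0)$ at a point of ramification $x_i$ the fibre is a reducible non-reduced surface whose components and intersection complex depend on the parity of $x_i$, naturally producing the two sums $\sum_{x_i \text{ odd}}$ and $\sum_{x_i \text{ even}}$ also visible in Proposition \ref{Ygh11}. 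Over $g^{-1}(\infty)$ at a point of ramification $y_j \in \{1,2,4\}$ the fibre has $20$, $9$ or $1$ components, producing the constants $c_1, c_2$. Over $g^{-1}(1/256)$ at a point of ramification $z_l$ the fibre is a K3 along which the total space carries isolated $cA_{z_l-1}$ singularities whose contribution to $e$ depends on the parity of $z_l$, producing the term $\tfrac{m_{\mathrm{odd}} - n}{2}$. Finally, ramification points of $g$ outside $\{0,1/256,\infty\}$ carry smooth K3 fibres and contribute nothing.

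The role of $C_g$ enters through the Nikulin double-cover identity
\[e(\calY_g|_U) = \tfrac{1}{2}\bigl(e(\calX_g|_U) + 3e(C_g|_U)\bigr),\]
which expresses $\calY_g|_U$ as the resolved quotient of $\calX_g|_U$ by the fibrewise Nikulin involution with fixed locus $C_g|_U$. The $s$ irreducible components of $C_g$ contribute $s$ to $h^{1,1}(\calY_g)$ via the horizontal divisors they sweep out (as in Proposition \ref{Ygh11}); meanwhile the Euler characteristic of the normalization of $C_g$ is $2s - 2p_g(C_g)$, so the net contribution of $C_g$ to the combination $2h^{1,1}(\calY_g) - e(\calY_g) = 2h^{2,1}(\calY_g)$ cancels the $s$ and leaves a $2p_g(C_g)$ term.

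Rearranging all contributions yields the claimed formula. For the \emph{moreover} clause, $g$ being unramified over $\lambda = 1/256$ forces $m = n$ with $z_i = 1$ for all $i$, whence $m_{\mathrm{odd}} = n$ and the middle term vanishes; the constraint $k + l + m - n - r = 2$ from Proposition \ref{Ytrivialcanonicalprop}, specialised to $l = 2$ and $m = n$, then gives $k = r$. The main obstacle will be the bookkeeping for the $g^{-1}(0)$ fibres, where the parity of $x_i$, the two-stage resolution of the $cA_{x_i - 1}$ curves of singularities, and the intersection arrangement of the resulting exceptional divisors must all be tracked carefully; consistency with the constants appearing in Proposition \ref{Ygh11} supplies a useful internal check.
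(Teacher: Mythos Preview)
Your strategy is genuinely different from the paper's. The paper computes $b_3(\calY_g)$ directly via the Leray spectral sequence: it identifies $H^3(\calY_g,\Q)$ with $H^1(\Proj^1,j_*R^2(\psi_U)_*\Q)$, splits the local system into a transcendental part $\calT(\calY_g)$ and a N\'eron--Severi part, borrows the computation of $h^1(\Proj^1,j_*\calT(\calY_g))=2+2k+(m_{\mathrm{odd}}-n)$ from the companion paper on $\calX_g$, and then observes that the sub-local system $\LL_{\mathrm{Nik}}$ spanned by the classes $F_1,\ldots,F_8$ is isomorphic to $(\pi_U|_{C_U})_*\Q_{C_U}$, so that $h^1(\Proj^1,j_*\LL_{\mathrm{Nik}})=h^1(\hat{C}_g,\Q)=2p_g(C_g)$. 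This is where $p_g(C_g)$ enters, and no Euler-characteristic bookkeeping is needed.

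Your route via $e(\calY_g)$ can be made to work, but as written it has a real gap. Over $U$ the family $\psi_U$ is a smooth K3 fibration, so $e(\calY_g|_U)=24\,e(U)$; likewise $e(\calX_g|_U)=24\,e(U)$ and $e(C_g|_U)=8\,e(U)$, and your Nikulin double-cover identity reduces to $24\,e(U)=\tfrac12\cdot24\,e(U)+\tfrac32\cdot8\,e(U)$, which is a tautology. In other words, the identity over $U$ carries no information about $p_g(C_g)$, and the fibration computation of $e(\calY_g)$ via the explicit singular fibres of Proposition~\ref{KYgtrivial} yields an expression purely in the ramification data $x_i,y_j,z_\ell$, with no $C_g$ in sight. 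Combined with Proposition~\ref{Ygh11}, you would then obtain $h^{2,1}$ as a function of the $x_i,y_j,z_\ell$ and of $s$, not of $p_g(C_g)$.

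The missing ingredient is a separate Riemann--Hurwitz computation for the $8$-fold cover $\hat{C}_g\to\Proj^1$: using the explicit ramification of $C_2\to\calM_{M_2}$ from Lemma~\ref{C2props} and the ramification profile of $g$, one can write $2s-2p_g(C_g)=e(\hat C_g)$ in terms of the $x_i,y_j,z_\ell$, and only then does the substitution convert your answer into the stated form. If you add that step and carry out the (substantial) Euler-characteristic computations for each singular fibre type, the argument closes; but as proposed, the appeal to the Nikulin identity does not do the work you assign to it.
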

\begin{proof} To compute $h^{2,1}(\calY_g)$, we first find the third Betti number $b_3(\calY_g)$, then use the fact that $\calY_g$ is a Calabi-Yau threefold, so that $b_3(\calY_g) = 2h^{2,1}(\calY_g) + 2$.

As above, let  $U := g^{-1}(U_{M_2})$. Then let $j\colon U \to \Proj^1$ denote the inclusion map and let $\psi_U\colon \calY_g|_U \to U$ denote the restriction of the fibration $\psi_g\colon \calY_g \to \Proj^1$ to $U$. Applying \cite[Proposition 3.3]{cytfmqk3s}, noting that the condition on the singular fibres of $\calY_g$ is satisfied by the description of these fibres given in the proof of Proposition \ref{KYgtrivial}, we see that 
$H^3(\calY_g,\Q) \cong H^1(\Proj^1,j_*R^2(\psi_U)_*\Q)$.
It therefore suffices to compute the rank of this latter group. 

By the discussion in \cite[Section 2.1]{flpk3sm}, there is a splitting of $R^2(\psi_U)_*\Q$ as a direct sum of two irreducible $\Q$-local systems
\[R^2(\psi_U)_*\mathbb{Q} = \mathcal{NS}(\mathcal{Y}_g) \oplus \mathcal{T}(\mathcal{Y}_g),\]
where $\mathcal{NS}(\mathcal{Y}_g)$ consists of those classes which are in $\NS(Y_p) \otimes \mathbb{Q}$ for every smooth fibre $Y_p$ of $\mathcal{Y}_g$, and $\mathcal{T}(\mathcal{Y}_p)$ is the orthogonal complement of $\mathcal{NS}(\mathcal{Y}_g)$. We may therefore split
\[H^3(\calY_g,\Q) = H^1(\Proj^1,j_*R^2(\psi_U)_*\Q) = H^1(\Proj^1,j_*\mathcal{NS}(\mathcal{Y}_g)) \oplus H^1(\Proj^1,j_*\mathcal{T}(\mathcal{Y}_g)).\]

Now let $\pi_g\colon \calX_g \to \Proj^1$ denote the threefold fibred by $M_2$-polarized K3 surfaces related to $\psi_g\colon \calY_g \to \Proj^1$. Then, by \cite[Proposition 3.1]{flpk3sm}, the transcendental variations of Hodge structure $\mathcal{T}(\calY_g)$ and $\mathcal{T}(\calX_g)$ are isomorphic over $\R$. So, by \cite[Proposition 3.6]{cytfmqk3s}, we see that
\[h^1(\Proj^1,j_*\mathcal{T}(\calY_g)) = h^1(\Proj^1,j_*\mathcal{T}(\calX_g)) = 2 + 2k + (m_{\mathrm{odd}} - n).\]

Next we consider the $\Q$-local system $\mathcal{NS}(\mathcal{Y}_g)$. Let $\LL_{\mathrm{Nik}}$ denote the sub-$\Q$-local system of $\mathcal{NS}(\mathcal{Y}_g)$ generated by the classes of the divisors $\{F_1,\ldots,F_8\}$. Then it follows from \cite[Corollary 3.2]{flpk3sm} that there is a decomposition $\mathcal{NS}(\mathcal{Y}_g) \cong \Q^{12} \oplus \LL_{\mathrm{Nik}}$

We compute $\LL_{\mathrm{Nik}}$ as follows. Let $\pi_U\colon \calX_g|_U \to U$ (resp. $C_U$) denote the restriction of the fibration $\pi_g\colon \calX_g \to \Proj^1$ (resp. the curve $C_g \subset \calX_g$) to the open set $g^{-1}(U)$. Recall that the fixed locus of the fibrewise Nikulin involution on $\calX_g|_U$ is given by the curve $C_U$, and that the divisors $F_i \subset \calY_g$ arise from the resolution of the singularities in the quotient by this resolution. It therefore follows that there is an isomorphism of $\Q$-local systems
\[\LL_{\mathrm{Nik}} \cong (\pi_U|_{C_U})_*\Q_{C_U},\]
where $\Q_{C_U}$ denotes the constant sheaf with stalk $\Q$ on $C_U$.

Thus, we have 
\[h^1(\Proj^1,j_*\mathcal{NS}(\mathcal{Y}_g)) = h^1(\Proj^1,j_*\LL_{\mathrm{Nik}}) = h^1(\Proj^1,j_*(\pi_U|_{C_U})_*\Q_{C_U}).\]
Moreover, by the Leray spectral sequence, we have an isomorphism
\[H^i(\Proj^1,j_*(\pi_U|_{C_U})_*\Q_{C_U}) \cong H^i(\hat{C}_g,\Q),\] 
for every $i \geq 0$, where $\hat{C}_g$ denotes the normalization of $C_g$. We therefore find that
\[h^1(\Proj^1,j_*\mathcal{NS}(\mathcal{Y}_g)) = h^1(\hat{C}_g,\Q) = 2p_g(C_g).\]

Putting everything together, we obtain
\[b_3(\calY_g) = 2 + 2k + (m_{\mathrm{odd}} - n) + 2p_g(C_g),\]
and the result follows.
\end{proof}

\begin{remark} We note that the analogue of \cite[Proposition 4.1]{cytfmqk3s} is not true for $\calY_g$: even if we assume that $g$ is unramified over $\lambda = \frac{1}{256}$, in general we cannot realize every small deformation of $\calY_g$ by simply deforming the map $g$.

The reason for this is as follows. In \cite{cytfmqk3s}, the N\'{e}ron-Severi group of a general fibre in the K3 fibration on $\calX_g$ is isometric to $M_2$ and the smooth fibres form an $M_2$-polarized family. Consequently, the N\'{e}ron-Severi group of a general fibre is preserved under monodromy. It follows that any small deformation of $\calX_g$ is also fibred by $M_2$-polarized K3 surfaces, so can be realized as a pull-back of the family $\calX_2$, as seen in \cite[Proposition 4.1]{cytfmqk3s}.

However, in the setting presented here, we have seen that monodromy acts non-trivially on the N\'{e}ron-Severi group of a general fibre of $\calY_g$. Due to this, deformations need not preserve the entire N\'{e}ron-Severi group; they only need to preserve the part that is fixed under monodromy. In other words, any deformation of $\calY_g$ must be fibred by K3 surfaces, but the rank of the N\'{e}ron-Severi group of a general fibre in such fibrations may drop. Deformations of this type can obviously no longer be pull-backs of $\calY_2$.
\end{remark}

\begin{example} As a very simple example, we compute the Hodge numbers of the Calabi-Yau threefold $\calY_2'$. In this case $g$ is the map with $(k,l,m,n,r) = (4,2,4,8,0)$ and $[x_1,x_2,x_3,x_4] =[2,2,2,2]$, $[y_1,y_2] = [4,4]$, and $[z_1,z_2,z_3,z_4] = [2,2,2,2]$.

The action of monodromy around singular fibres of $\calY_2'$ fixes the N\'{e}ron-Severi lattice of a general fibre (by construction), so in particular it fixes the eight divisors $F_i$. The curve $C_g$ thus has eight components, all of which are rational curves. From Propositions \ref{Ygh11} and \ref{Ygh21} we  obtain 
\begin{align*} h^{1,1}(\calY'_2) & = 12 +  \sum_{x_i\ \mathrm{odd}} x_i^2 + \sum_{x_i\ \mathrm{even}} (x_i^2 + 1) + s + c_1 + c_2 \\
&= 12 + 0 + 4(5) + 8 + 0 + 0 \\
&= 40
\end{align*}
and $h^{2,1}(\calY_2') = k + \frac{1}{2}(m_\mathrm{odd} - n) + p_g(C_g) = 4 + \frac{1}{2}(0 - 8) + 0 = 0$, as expected from Proposition \ref{Y2'hodge}.
\end{example}

\begin{example}  As a harder example, we consider the map $g\colon \Proj^1 \to \calM_{M_2}$ defined by  $(k,l,m,n,r) = (1,2,5,5,1)$, $[x_1] = [5]$, $[y_1,y_2] = [1,4]$, and $[z_1,\ldots,z_5] = [1,1,1,1,1]$. By \cite[Theorem 5.10]{flpk3sm}, the Shioda-Inose fibred threefold $\calX_g$ corresponding to this map $g$ is birational to the mirror to the quintic threefold. We will compute the Hodge numbers of the corresponding Kummer fibred threefold $\calY_g$, which is Calabi-Yau by Proposition \ref{YgCY3}.

To do this, we begin by computing the curve $C_g$. This is given by the pull-back of $C_2$ by $g$. It has three irreducible components, given by the pull-backs of the irreducible components of $C_2$. Let $a_1 \in \Proj^1$ denote the unique point lying over $\lambda = 0$, $b_1 \in \Proj^1$ (resp. $b_2 \in \Proj^1$) denote the point over $\lambda = \infty$ where the ramification order of $g$ is $1$ (resp. $4$), and $d_1,\ldots,d_5$ denote the five points over $\lambda = \frac{1}{256}$. 

Then two of the irreducible components of $C_g$ (the pull-backs of the components of $C_2$ that are double covers of $\calM_{M_2}$) are double covers of $\Proj^1$ having a singularity of type $A_4$ at $a_1$, a simple ramification over $b_1$, and a singularity of type $A_3$ over $b_2$. The normalizations of these components are simply ramified over $a_1$ and $b_1$, so both have genus $0$.

The remaining irreducible component of $C_g$ (the pull-back of the component of $C_2$ that is a $4$-fold cover of $\calM_{M_2}$) is a $4$-fold cover of $\Proj^1$ having a pair of singularities of type $A_4$ over $a_1$, a $4$-fold ramification over $b_1$, a simple quadruple point over $b_2$, and ramification profile $[2,1,1]$ over each $d_i$. Its normalization has ramification profiles $[2,2]$ over $a_1$, $[4]$ over $b_1$, $[1,1,1,1]$ over $b_2$, and $[2,1,1]$ over each $d_i$. By Hurwitz's theorem, this normalization has genus $2$.

From this we can calculate the Hodge numbers of $\calY_g$. From Propositions \ref{Ygh11} and \ref{Ygh21} we  obtain 
\begin{align*} h^{1,1}(\calY_g) & = 12 +  \sum_{x_i\ \mathrm{odd}} x_i^2 + \sum_{x_i\ \mathrm{even}} (x_i^2 + 1) + s + c_1 + c_2 \\
&= 12 + 25 + 0 + 3 + 0 + 19 \\
&= 59
\end{align*}
and $h^{2,1}(\calY_g) = r + p_g(C_g) = 1 + 2 = 3$.
\end{example}

\bibliography{Books}
\bibliographystyle{amsplain}
\end{document}